\definecolor{darkergreen}{rgb}{0.0, 0.5, 0.0}
\numberwithin{equation}{section}
\def\theequation{\arabic{section}.\arabic{equation}}
\newcommand{\be}{\begin{eqnarray}}
	\newcommand{\ee}{\end{eqnarray}}
\newcommand{\ce}{\begin{eqnarray*}}
	\newcommand{\de}{\end{eqnarray*}}
\newtheorem{theorem}{Theorem}[section]
\newtheorem{lemma}[theorem]{Lemma}
\newtheorem{proposition}[theorem]{Proposition}
\newtheorem{Examples}[theorem]{Example}
\newtheorem{corollary}[theorem]{Corollary}
\newtheorem{definition}[theorem]{Definition}
\theoremstyle{definition}
\newtheorem{remark}[theorem]{Remark}
\def\eps{\varepsilon}
\def\u{\mathbf{u}}
\def\w{\mathrm{w}}
\def\p{\partial}
\def\<{{\langle}}
\def\>{{\rangle}}
\def\({{\Big(}}
\def\){{\Big)}}
\def\bx{{\mathbf{x}}}
\def\dif{{\mathord{{\rm d}}}}
\def\no{\nonumber}
\def\={&\!\!=\!\!&}
\newcommand{\eqdef}{\stackrel{\mbox{\tiny def}}{=}}
\def\cA{{\mathcal A}}
\def\cJ{{\mathcal J}}
\def\cS{{\mathcal S}}
\def\mC{{\mathbb C}}
\def\mL{{\mathbb L}}
\def\mN{{\mathbb N}}
\def\mR{{\mathbb R}}
\def\mS{{\mathbb S}}
\def\mT{{\mathbb T}}
\def\1{{\mathbf{1}}}
\def\sC{{\mathscr C}}
\def\sF{{\mathscr F}}
\def\sI{{\mathscr I}}
\def\sL{{\mathscr L}}
\def\geq{\geqslant}
\def\leq{\leqslant}
\def\eps{\varepsilon}
\def\u{\mathbf{u}}
\def\w{\mathrm{w}}
\def\p{\partial}
\def\<{{\langle}}
\def\>{{\rangle}}
\def\({{\Big(}}
\def\){{\Big)}}
\def\bx{{\mathbf{x}}}
\def\dif{{\mathord{{\rm d}}}}
\def\no{\nonumber}
\def\={&\!\!=\!\!&}
\def\bt{\begin{theorem}}
	\def\et{\end{theorem}}
\def\bl{\begin{lemma}}
	\def\el{\end{lemma}}
\def\br{\begin{remark}}
	\def\er{\end{remark}}
\def\bx{\begin{Examples}}
	\def\ex{\end{Examples}}
\def\bd{\begin{definition}}
	\def\ed{\end{definition}}
\def\bp{\begin{proposition}}
	\def\ep{\end{proposition}}
\def\bc{\begin{corollary}}
	\def\ec{\end{corollary}}
\def\geq{\geqslant}
\def\leq{\leqslant}
 \def\R{\mathbb R}
 \def\R{\mathbb R}
\def\<{\langle} \def\>{\rangle}
\begin{document}

\title[Global well-posedness for 2D gPAM]{Global well-posedness for 2D generalized Parabolic Anderson Model via paracontrolled calculus}

\author{Hao Shen}
\address[H. Shen]{Department of Mathematics, University of Wisconsin - Madison, USA}
\email{pkushenhao@gmail.com}

\author{Rongchan Zhu}
\address[R. Zhu]{Department of Mathematics, Beijing Institute of Technology, Beijing 100081, China 
}
\email{zhurongchan@126.com}

\author{Xiangchan Zhu}
\address[X. Zhu]{ Academy of Mathematics and Systems Science,
	Chinese Academy of Sciences, Beijing 100190, China
}
\email{zhuxiangchan@126.com}

\dedicatory{(In memory of Giuseppe Da Prato.)}

\maketitle

\begin{abstract}
	This article revisits the problem of global well-posedness for the generalized parabolic Anderson model on $\mathbb{R}^+\times \mathbb{T}^2$ within the framework of paracontrolled calculus \cite{GIP15}. The model is given by the equation:
	\begin{equation*}
		(\partial_t-\Delta) u=F(u)\eta
	\end{equation*}
	where $\eta\in C^{-1-\kappa}$ with $1/6>\kappa>0$, and $F\in C_b^2(\mathbb{R})$. Assume that $\eta\in C^{-1-\kappa}$ and can be lifted to enhanced noise, we derive new a priori bounds. The key idea follows from the recent work
	\cite{CFW24}  by A.Chandra, G.L. Feltes and H.Weber  to represent the leading error term as a transport type term, and our  techniques encompass the paracontrolled calculus, the maximum principle, and the localization approach (i.e. high-low frequency argument).
\end{abstract}

\setcounter{tocdepth}{2}
\tableofcontents

\section{Introduction}
In this paper we consider the following equation, known as the generalized parabolic Anderson model (gPAM), on $\mR^+\times \mT^2$:
\begin{equation}\label{equ}
	\aligned
	\sL u&=F(u)\eta\;,
	\\u(0)&=u_0\;,
	\endaligned
\end{equation}
where $\sL=\p_t-\Delta$,  $\eta\in C_TC^{-1-\kappa}$ with $1/6>\kappa>0$ and $F\in C_b^2(\mR)$. The presence of the singularity in $\eta$ renders the nonlinear term $F(u)\eta$ analytically ill-defined. More precisely, in
view of Schauder's estimates $u$ is expected to be two degrees of regularity better, i.e. $ C^{1-\kappa}$,
and consequently, the product $F(u)\eta$ is not well-defined  in the classical sense.

This lack of regularity problem arises in a broad class of singular SPDEs.
The pioneering work in solving such problems and giving a notion of strong solutions to these equations
is attributed to Da Prato and Debussche for the 2D Navier--Stokes equation driven by space--time white noise \cite{DD02},
as well as the 2D dynamical $\Phi^4$ equation \cite{DD03}. In these landmark papers, they developed a type of argument, now known as the Da Prato--Debussche method, which identifies the most singular parts of the solutions to these equations and
re-interpret the singular nonlinear term by renormalization techniques.
This method is later applied to numerous settings, and extended in various ways. For instance, for the model \eqref{equ} with $\eta$ being the spatial white noise and $F(u)=u$, \cite{HL2d} obtained well-posedness by a very simple argument of Da Prato--Debussche type. Note that the method  has also been applied to elliptic and hyperbolic equations; we refer to \cite[Section~4.1]{CS20} for more references that were directly influenced by the above seminal work of Da Prato and Debussche.

The more irregular setting, as exemplified by \eqref{equ}, remained open for a much longer period, demanding substantially new ideas. These innovative ideas, concepts and theories emerged by the time of the introduction of the theory of regularity structures by Hairer \cite{Hai14} and the  paracontrolled calculus by Gubinelli, Imkeller, and Perkowski \cite{GIP15}.  Based on these theories
 \eqref{equ} can be interpreted in a renormalized sense. In particular, local well-posedness of \eqref{equ} has been established in  \cite{Hai14} and \cite{GIP15}.
These theories permit to treat a large number of singular subcritical SPDEs  including the Kardar-Parisi-Zhang (KPZ) equation and the stochastic quantization equations for quantum fields (see \cite{Hai13, CC18, GP17,  HS16} and references therein) and the 3D Navier--Stokes system  driven by space--time white noise \cite{ZZ15}.


On the other hand,  the question of {\it global} existence is even more challenging. We refer to \cite{CFW24} and \cite{HZZ23} for a
detailed discussion and further references.
Concerning \eqref{equ}, in cases where $F$ is linear, global solutions over time can be directly derived through fixed-point arguments (see, e.g., \cite{HL}). When $F$ has zeros that yield trivial super- and subsolutions, a priori estimates can be derived using comparison arguments (see, e.g., \cite{CFG17}). For general  $F\in C_b^2$ nonlinear term will appear in the solution theory of \eqref{equ} which makes it difficult to directly obtain global solutions to \eqref{equ}. Very recently, novel a priori estimates have been derived in \cite{CFW24}  within the framework of regularity structures, which leads to global well-posedness
for a certain range of $\kappa$ (see Remark~\ref{rem13} below).
 In this paper, using the idea in \cite{CFW24} that transferring the nonlinearity appeared in the original estimate to a transport term, we establish global well-posedness using paracontrolled calculus.

Let $\sI=\sL^{-1}$ and $T>0$.

\bd \label{enhanced}
We say that  $\eta\in L^\infty_TC^{-1-\kappa}$ with $0<\kappa<1/6$ can be lifted to an enhanced noise if there exist $\eta_n\in L^\infty_TC^\infty$
 such that
$\eta_n$ converges to $\eta$ in $L_T^\infty C^{-1-\kappa}$, and  there are constants $c_n$ and a function $g\in L_T^\infty C^{-2\kappa}$ such that
\begin{align*}
\lim_{n\to\infty}\|\sI (\eta_n)\circ\eta_n-c_n-g\|_{L_T^\infty C^{-2\kappa}}=0\;,
\end{align*}
where $\circ$ is paraproduct introduced in Section \ref{s:not}.

For notational convenience, we shall write
$$
g=:\sI (\eta)\circ\eta.
$$
We also call $(\eta,\sI(\eta)\circ \eta)$ the enhanced noise.
\ed

Let $\bar\kappa$ be the smaller solution to $4\kappa^2-7\kappa+1=0$, namely, $\bar\kappa=\frac18(7-\sqrt{33})\approx 0.15693$.

\bt\label{th:1} Let $\eta\in L^\infty_TC^{-1-\kappa}$ can  be lifted to an enhanced noise with   $0<\kappa<\bar\kappa$. For every given initial condition $u_0\in L^\infty$, there exists a unique global-in-time solution to \eqref{equ} in a paracontrolled sense (see Definition \ref{def:sol}).
\et

In \cite{CFW24} a mutiscale analysis and a new version of reconstruction  theorem are developed in the framework of regularity structures to derive a priori estimates.
In comparison to \cite{CFW24}, our primary techniques encompass the classical paracontrolled calculus established in \cite{GIP15}, the maximum principle and the key representation to express the leading error part as a transport type term (see Lemma \ref{Ru} below) as inspired by  \cite{CFW24}, as well as  a localization approach developed in \cite{GH18}.

\br\label{rem13}
(i) Comparing with  \cite[Theorem~1]{CFW24},
it is easy to see that $\bar\kappa$ here is slightly bigger than that in \cite{CFW24} (which is approximately $0.132123$). Hence,  we can allow $\eta$ to be  rougher w.r.t. space variable. Upon examining the proof in Section \ref{sec:proof}, it becomes apparent that the appropriate norms of the solution (e.g., $\|u\|_{\mS^{\alpha,\gamma}_T}$ in Section \ref{sec:proof}) depend polynomially on the suitable norm of the enhanced noise, similar to the findings in \cite{CFW24}. Consequently, by replacing $\Delta$ with $\Delta-m$ for $m>0$, uniformly bounded moments of the solutions over time, as demonstrated in \cite{CFW24}, can also be achieved.

(ii) Typical examples of $\eta$ are provided by spatial white noise $\zeta$ or $(-\Delta)^{s}\zeta$ with $0\leq s<\bar\kappa/2$ (see \cite{Hai14, GIP15}). In this case $c_n$ is time dependent and we could take it as a time-independent constant with an extra term which could be bounded by a constant multiply $t^{-\eps}$ for any $\eps>0$. Such a term could easily be controlled in the following and we ignore it for simplicity. Our results can also be readily extended to the nonlinear term expressed as $\sum_{i=1}^kF_i(u)\eta_i$ with $\eta_i$ as in Definition \ref{enhanced}.

(iii) One application of the a priori estimates 
 in \cite{CFW24} is establishing global-in-time bounds for the solutions to the dynamical sine-Gordon model introduced in \cite{HS16}, in the regime  $\beta^2\in (4\pi,(1+\bar\kappa)4\pi)$. Our results here necessitate $\eta\in L_T^\infty C^{-1-\kappa}$, but the renormalized terms $[\sin(\beta Z)]$ and $[\cos(\beta Z)]$ from the sine-Gordon model in \cite{HS16} do not satisfy this condition, where $Z$ solves the stochastic heat equation $\sL Z=\xi$ for space-time white noise $\xi$ on $\mathbb{R}^+\times \mathbb{T}^2$. Since the renormalized terms remain in a space-time distribution space $C^{-1-\kappa}_{t,x}$, we expect that our techniques can be applied to obtain global solutions for the dynamical sine-Gordon model using space-time paraproducts developed in \cite{BBF18}.
\er

In Section~\ref{s:not}, we summarize frequently used notations. In Section \ref{sec:loc} we recall the notion of paracontrolled solutions to \eqref{equ}.  We give the proof of Theorem \ref{th:1} in Section \ref{sec:proof}.  In Appendix, we collect several  results on paraproduct estimates and Schauder estimates.

{\bf Acknowledgments.}
H.S. gratefully acknowledges financial support from NSF grants
DMS-1954091 and CAREER DMS-2044415. R.Z. and X.Z. are grateful to
the financial supports   by National Key R\&D Program of China (No. 2022YFA1006300).
R.Z. gratefully acknowledges financial support from the NSFC (No. 12271030).  X.Z. is grateful to
the financial supports   by National Key R\&D Program of China (No. 2020YFA0712700) and  the NSFC (No.
12090014, 12288201)  and the support by key Lab of Random Complex Structures and Data Science, Youth Innovation Promotion Association (2020003), Chinese Academy of Science. This work is funded by the Deutsche Forschungsgemeinschaft (DFG, German Research Foundation) – Project-ID 317210226--SFB 1283.
\section{Preliminaries and notations}\label{s:not}
 Throughout the paper, we employ the notation $a\lesssim b$ if there exists a constant $c>0$ such that $a\leq cb$ and we write $a\simeq b$ if $a\lesssim b$ and $b\lesssim a$.
 We use $\cS'(\mT^d)$ to denote the space of tempered distributions on $\mT^d$.
 Let $\sF$ and $\sF^{-1}$ be the Fourier transform and the inverse of Fourier transform.
 For $j\geq -1$, let $\Delta_j$ be the usual block operator used in the Littlewood-Paley decomposition with $\Delta_j f=\sF^{-1}\rho_j\sF f$ with $(\rho_j)$ being a dyadic partition of unity. For index $i,j$ we write $i\sim j$ if $2^i\simeq 2^j$. We  introduce the following  Besov spaces (cf. \cite{BCD}):
\bd\label{Def25}
Let $p,q\in[1,\infty]$ and $\alpha\in\mR$. The Besov space $B^\alpha_{p,q}$ is defined by
$$
B^\alpha_{p,q}:=\left\{f\in\cS'(\mT^d):
\|f\|_{B^\alpha_{p,q}}:=\left(\sum_j 2^{\alpha jq}\|\Delta_j f\|_{L^p}^q\right)^{1/q}<\infty\right\}.
$$
The  H\"older-Zygmund space is defined by
$$
C^\alpha:=B^\alpha_{\infty,\infty}.
$$
\ed
In the following we used notation $\|\cdot\|_{\alpha}=\|\cdot\|_{C^\alpha}$ for simplicity.
It is well-known (c.f. \cite[page 99]{BCD}, \cite[Remark 2.6]{ZZZ20}) that   for any $0<\alpha\notin \mN$
\begin{equation}\label{f:sCalpha}
	\|f\|_{C^\alpha}\simeq \|f\|_{\sC^\alpha}\;,\end{equation}
where
$$ \|f\|_{\sC^\alpha}\eqdef \sum_{j=0}^{k}\|\nabla^j f\|_{L^\infty}+\sup_{x\neq y}\frac{|\nabla^kf(x)-\nabla^kf(y)|}{|x-y|^{\alpha-k}}\;,$$
for $k=\lfloor\alpha\rfloor$.

Given a Banach space $E$ with a norm  $\|\cdot\|_E$, we write  $C_TE$  to denote the space of continuous functions from $[0,T]$ to $E$.   For $p\in [1,\infty]$ we write $L^p_TE=L^p(0,T;E)$ for the space of $L^p$-integrable functions from $[0,T]$ to $E$, equipped with the usual $L^p$-norm.
For $\alpha\in(0,1)$ we  define $C^\alpha_TE$ as the space of $\alpha$-H\"{o}lder continuous functions from $[0,T]$  to $E$, endowed with the norm $$\|f\|_{C^\alpha_TE}\eqdef\sup_{s,t\in[0,T],s\neq t}\frac{\|f(s)-f(t)\|_E}{|t-s|^\alpha}+\sup_{t\in[0,T]}\|f(t)\|_{E}\;.$$
Set for $\gamma\in[0,1)$
$$\|f\|_{\mC^{\alpha,\gamma}_T}\eqdef\sup_{t\in[0,T]}t^\gamma\|f(t)\|_{\alpha}\;,\quad \|f\|_{C_T^{\alpha,\gamma}L^\infty}\eqdef\sup_{0\leq s< t\leq T}s^\gamma\frac{\|f(t)-f(s)\|_{L^\infty}}{|t-s|^{\alpha}}\;,$$
$$\|f\|_{\mS^{\alpha,\gamma}_T}\eqdef\|f\|_{\mC^{\alpha,\gamma}_T}+\|f\|_{C_T^{\alpha/2,\gamma}L^\infty}\;.$$
We also set
$$\mC^{\alpha,\gamma}_T\eqdef\{f:\|f\|_{\mC^{\alpha,\gamma}_T}<\infty\} \;,\qquad\mS^{\alpha,\gamma}_T\eqdef \{f:\|f\|_{\mS^{\alpha,\gamma}_T}<\infty.\}\;,$$
$\mS^{\alpha}_T\eqdef \mS^{\alpha,0}_T$
and $\|f\|_{\mL^\infty_T}=\|f\|_{L^\infty_TL^\infty}$. We also utilize $C_b^k$, where $k\in\mN$, to denote the spaces of bounded functions with bounded derivatives up to the $k$-th order and $\|f\|_{C_b^k}\eqdef \sum_{j=0}^k\|\nabla^jf\|_{L^\infty}$.

We recall the following paraproduct introduced by Bony (see \cite{Bon81}). In general, the product $fg$ of two distributions $f\in C^\alpha, g\in C^\beta$ is well defined if and only if $\alpha+\beta>0$. In terms of Littlewood-Paley blocks, the product $fg$ of two distributions $f$ and $g$ can be formally decomposed as
$$fg=f\prec g+f\circ g+f\succ g,$$
with $$f\prec g=g\succ f=\sum_{j\geq-1}\sum_{i<j-1}\Delta_if\Delta_jg, \quad f\circ g=\sum_{|i-j|\leq1}\Delta_if\Delta_jg.$$

See Appendix for useful paraproduct and related commutator estimates. Finally we introduce localizers in terms of Littlewood-Paley expansions. Let $R\in\mathbb{\mR}^+$. For $f\in\mathcal{S}'(\mathbb{T}^d)$ we define
$$\Delta_{>R}\eqdef\sum_{j> R}\Delta_j,\quad \Delta_{\leq R}\eqdef\sum_{j\leq R}\Delta_j.$$
Then it holds, in particular for $\alpha\leq \beta\leq \gamma$, that
\begin{align}\label{loc}\|\Delta_{>R}f\|_{\alpha}\lesssim 2^{-R(\beta-\alpha)}\|f\|_\beta,\quad \|\Delta_{\leq R}f\|_{\gamma}\lesssim 2^{R(\gamma-\beta)}\|f\|_\beta\,.
\end{align}
In the following we choose $\eps>0$ arbitrarily small constant.

\section{Paracontrolled Solution}\label{sec:loc}

In this section, we provide the definition of paracontrolled solutions. The multiplication of $F(u)$ and $\eta$ is not well-defined, as the expected sum of their regularities is not strictly positive for the resonant product $F(u) \circ \eta$ to be well-defined, as seen in Lemma~\ref{lem:para}. Collecting the terms that make $u$ too irregular leads to the following paracontrolled ansatz:
\begin{align}\label{anastz}
	u=F(u)\prec \sI(\eta)+u^\sharp.
\end{align}
where $\sI f(t)=\int_0^t P_{t-s}f(s)\dif s$ with $P_{t-s}=e^{(t-s)\Delta}$.
Then $u^\sharp$ becomes more regular than $u$ since
\begin{equation}\label{usharp}
	u^\sharp=u-\sI(F(u)\prec \eta)+[\sI,F(u)\prec]\eta,
\end{equation}
where $[\sI,F(u)\prec]\eta$ denotes the commutator between $\sI$ and $F(u)\prec$ given by
$$
[\sI,F(u)\prec]\eta=\sI(F(u)\prec \eta)-F(u)\prec \sI(\eta).
$$
We use the same notation for  other commutators as well.
The second  terms on the right hand side of \eqref{usharp}  cancel the irregular terms in $u$ whereas the last term has better regularity by Lemma \ref{commutator1}.
 Since using a simple paralinearization lemma from \cite[Lemma 2.6]{GIP15}, $F(u)\in C^{1-\kappa}$ and $F(u)-F'(u)\prec u$ has better regularity to make $(F(u)-F'(u)\prec u)\circ \eta$ well-defined. This leads to the following decomposition of $F(u)\circ \eta$:
 $$F(u)\circ \eta=   (F(u)-F'(u)\prec u)\circ \eta+(F'(u)\prec u)\circ \eta\;.$$
We then use paracontrolled ansatz \eqref{anastz} to replace $u$ in the second $F'(u)\prec u$ and  the commutator Lemma  \ref{lem:com2}, we write formally $F(u)\circ \eta$ as
\begin{equation}\label{product}
	\begin{aligned}
		F(u)\circ \eta&=(F(u)-F'(u)\prec u)\circ \eta+(F'(u)\prec u^\sharp)\circ  \eta+\text{com}(F'(u),F(u)\prec \sI(\eta),\eta)\\&+F'(u) \text{com}(F(u), \sI(\eta), \eta)+F(u)F'(u)(\sI(\eta)\circ \eta)\;,
	\end{aligned}
\end{equation}
where $\text{com}$ is the  commutator and is given by $\text{com}(f,g,h)=(f\prec g)\circ h-f(g\circ h)$ for $f,g,h$ smooth introduced in Lemma \ref{lem:com2}. Based on the paraproduct estimates and commutator estimates, we find every term in \eqref{product} is well-defined except $\sI(\eta)\circ\eta$.
As $\eta$ can be lifted to  enhanced noise (see Definition \ref{enhanced}), $\sI(\eta)\circ \eta$  could be understood in a  renormalized sense (see \cite{Hai14, GIP15}). 

Hence the rigorous paracontrolled ansatz and the definition of  $F(u)\circ \eta$ read as \eqref{product}.


\begin{definition}\label{def:sol}
	We say that a pair of $(u,u^\sharp) \in \mS_T^{\alpha,\gamma}\times \mC_T^{2\alpha,\gamma}$ with some $0\leq\gamma<1$, $2/3<\alpha<1$ is a paracontrolled solutions to \eqref{equ}
	provided \eqref{anastz} holds
	and the equation \eqref{equ} holds in the analytically weak sense with the resonant product $F(u) \circ \eta$  given by \eqref{product}.
\end{definition}

We have the following result from \cite[Theorem 5.4]{GIP15}.
\begin{theorem}\label{local}
 For any $u_0\in L^\infty$ there exists a $T^*>0$ and a unique maximal paracontrolled solution $(u,u^\sharp)$ to \eqref{equ} on $[0,T^*)$ with $u(0)=u_0$.
\end{theorem}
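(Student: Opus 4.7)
The plan is a Picard-type fixed-point argument for the paracontrolled ansatz, in the spirit of \cite[Section~5]{GIP15}. Fix exponents $\alpha\in(2/3,\,1-\kappa)$ and a time weight $\gamma\in[0,1)$. The lower bound $\alpha>2/3$ is dictated by the need to make the resonant product \eqref{product} well defined in $C_TC^{-2\kappa}$ (roughly $3\alpha-1-\kappa>0$), while $\alpha<1-\kappa$ is required so that the ansatz term $F(u)\prec\sI(\eta)$ is indeed as irregular as $u$ and not more regular. The weight $\gamma$ is needed because $u_0\in L^\infty$ does not immediately enter $C^\alpha$: the heat semigroup only yields $\|P_tu_0\|_\alpha\lesssim t^{-\alpha/2}\|u_0\|_{L^\infty}$, and a corresponding blow-up must be tolerated for $u^\sharp$.

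Given an input $(v,v^\sharp)\in\mS_T^{\alpha,\gamma}\times\mC_T^{2\alpha,\gamma}$ satisfying $v=F(v)\prec\sI(\eta)+v^\sharp$, I define the output pair $(\tilde u,\tilde u^\sharp)$ by first interpreting $F(v)\circ\eta$ through \eqref{product} and then setting
\[\sL\tilde u^\sharp=F(v)\succ\eta+F(v)\circ\eta-[\sL,F(v)\prec]\sI(\eta),\qquad \tilde u^\sharp(0)=u_0,\]
\[\tilde u:=F(v)\prec\sI(\eta)+\tilde u^\sharp.\]
Using $\sL\sI(\eta)=\eta$, one verifies $\sL\tilde u=F(v)\eta$, and since $\sI(\eta)(0)=0$ we have $\tilde u(0)=u_0$. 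The solution map is $\Psi:(v,v^\sharp)\mapsto(\tilde u,\tilde u^\sharp)$.

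The key estimates to establish are: \textbf{(i)} the paralinearization bound $\|F(v)-F'(v)\prec v\|_{C^{2\alpha}}\lesssim\|v\|_{\alpha}^{2}$ via \cite[Lemma~2.6]{GIP15} using $F\in C_b^2$; \textbf{(ii)} control of $F(v)\circ\eta$ in a weighted $C^{-2\kappa}$ norm using each of the five terms in \eqref{product}, paraproduct estimates, the trilinear commutator Lemma~\ref{lem:com2}, and the enhanced noise datum $\sI(\eta)\circ\eta\in L_T^\infty C^{-2\kappa}$; \textbf{(iii)} control of $[\sL,F(v)\prec]\sI(\eta)$ in a weighted norm with spatial regularity close to $2\alpha$ via Lemma~\ref{commutator1}; \textbf{(iv)} weighted Schauder estimates inverting $\sL$ with initial data $u_0\in L^\infty$, yielding $\tilde u^\sharp\in\mC_T^{2\alpha,\gamma}$ with matching H\"older-in-time norm, and consequently $\tilde u\in\mS_T^{\alpha,\gamma}$ by going back to the ansatz. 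Together these inequalities show that $\Psi$ sends a ball $B_R\subset\mS_T^{\alpha,\gamma}\times\mC_T^{2\alpha,\gamma}$ of radius $R=R(\|u_0\|_{L^\infty},\|\eta\|_{\mathrm{enh}})$ into itself.

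For contraction, I apply essentially the same estimates to the differences $\Psi(v_1,v_1^\sharp)-\Psi(v_2,v_2^\sharp)$, exploiting Lipschitz bounds on $F$ and $F'$ provided by $F\in C_b^2$; the weighted Schauder inversion supplies a small prefactor $T^\delta$ for some $\delta>0$, giving contraction on $[0,T]$ for $T$ small enough in terms of $\|u_0\|_{L^\infty}$ and the enhanced-noise norm. Uniqueness on $[0,T]$ is immediate, and a maximal solution is obtained in the standard way: set $T^*$ to be the supremum of existence times and, for any $t<T^*$, restart the local argument from the initial datum $u(t)\in L^\infty$ (the enhanced noise persists on any finite interval). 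The main technical obstacle I expect is the simultaneous calibration of $\alpha$, $\gamma$, and the time-weighted norms throughout every paraproduct and commutator estimate: one must check that the weight produced by the Schauder inversion on the right-hand side exactly matches the weight required on the left for each of the five terms of \eqref{product} and for the commutator $[\sL,F(v)\prec]\sI(\eta)$, and that these constraints are jointly compatible with $\gamma<1$ and with $\alpha$ in the admissible range dictated by $\kappa$.
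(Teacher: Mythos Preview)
Your proposal is correct and follows precisely the approach the paper invokes: the paper does not give its own proof of Theorem~\ref{local} but cites \cite[Theorem~5.4]{GIP15} together with the observation (Remark after the theorem) that working in time-weighted spaces $\mS_T^{\alpha,\gamma}\times\mC_T^{2\alpha,\gamma}$ accommodates $L^\infty$ initial data. Your Picard iteration on the paracontrolled ansatz, with the paralinearization, commutator, and weighted Schauder ingredients you list, is exactly that argument.
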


\br In \cite[Theorem 5.4]{GIP15} the initial condition is in $C^{2/3+}$, where and in the sequel  $C^{\alpha+}$ denote the space $C^{\alpha+\eps}$ for arbitrarily small $\eps>0$. Since $\eps$ can be neglected, we omit it here.  By considering space with time singularity we could deduce the above result for initial condition in $L^\infty$ (see \cite{ZZ15}).
\er

\section{Proof of Theorem \ref{th:1}}\label{sec:proof}
In this section, we present the proof of the main results. Initially, we decompose the equation \eqref{equ} appropriately and establish a crucial lemma in Section \ref{sec:dec}. This lemma represents the leading error part as a transport term. Subsequently, in the remainder of this section, we prove several a priori estimates for the involved quantities in various spaces.

 When the initial condition $u_0\in L^\infty$, we are aware that the solution will reside in $C^{1-\kappa}$ after a short time, allowing us to restart the solution from $C^{1-\kappa}$. Subsequently, we address the case that $u_0\in C^{1-\kappa}$. By Theorem \ref{local} it suffices to deduce a priori bound for $(u,u^\sharp)$ in $\mS_T^{\alpha,\gamma}\times \mC_T^{2\alpha,\gamma}$ with $\alpha>2/3,\gamma\in[0,1)$.

  The paracontrolled anastz is as follows:
\begin{align}\label{para}u=F(u)\prec \sI(\Delta_{>R}\eta)+u^\sharp:=u_1+u^\sharp.
\end{align}
where  $R$ is a large constant which will be given later. Here $u^\sharp$ is a little different from the one in \eqref{anastz}. As $\sI(\Delta_{\leq R}\eta)$ is smooth, it is easy to see a priori bound for $(u,u^\sharp)$ here implies a priori bound for $(u,u^\sharp)$ in \eqref{anastz}.
With a minor abuse of notation we still write $u^\sharp$ here.
 We also have by Lemma \ref{lem:para} and Lemma \ref{Le11}
\begin{align}\label{u1}\|u_1\|_{C_TC^{1-\kappa}}\lesssim 1.
\end{align}

\subsection{Decomposition}\label{sec:dec}
  For $u^\sharp$ we use paraproduct decomposition  to have
 \begin{align*}
 	\sL u^\sharp&= -[\sL(F(u)\prec \sI(\Delta_{>R}\eta ))-F(u)\prec \Delta_{>R}\eta]
 \\&\quad+F(u)\succ \Delta_{>R}\eta +F(u)\circ \Delta_{>R}\eta+F(u)\Delta_{\leq R}\eta\;,
\\ u^\sharp(0)&=u_0\in C^{1-\kappa}\;.
 \end{align*}
We then define $R(u)\eqdef F(u)-F'(u)\prec u$ and
substitute $F(u)=R(u)+F'(u)\prec u$ into $F(u)\circ \Delta_{>R}\eta$ to have
\begin{equation}\label{eq:usharp}
	\aligned
 \sL u^\sharp&=-[\sL(F(u)\prec \sI(\Delta_{>R}\eta ))-F(u)\prec \Delta_{>R}\eta]
  \\&\quad+F(u)\succ \Delta_{>R}\eta +R(u)\circ \Delta_{>R}\eta +(F'(u)\prec u)\circ  \Delta_{>R}\eta
  +F(u)\Delta_{\leq R}\eta\;,
  \\ u^\sharp(0)&=u_0\in C^{1-\kappa}\;.
  \endaligned
 \end{equation}

By the paraproduct estimates in Lemma \ref{lem:para}, the $L^\infty$-norm of every term on the RHS of \eqref{eq:usharp}, except $R(u)\circ \Delta_{>R}\eta$, depends linearly on $u$ and $u^\sharp$. According to Lemma \ref{Ru} below, we understand that the leading term of $R(u)\circ \Delta_{>R}\eta$ is represented by a transport-type term $\nabla u^{\sharp}\cdot (G(u)\circ \eta)$. To apply the maximum principle, which necessitates the majority of terms to remain in $L^\infty$, we decompose the equation \eqref{eq:usharp} into two equations with $u^\sharp=u_1^\sharp+u_2^\sharp$: Formally, the right-hand side of $u_2^\sharp$ is in $L^\infty$.
 \begin{align}
 	\sL u_1^\sharp&= -[\sL(F(u)\prec \sI(\Delta_{>R}\eta ))-F(u)\prec \Delta_{>R}\eta]\no
  \\&\quad+F(u)\succ \Delta_{>R}\eta+F(u)F'(u)\Delta_{>R_1}(\sI(\eta)\circ \eta)\;, \label{eq:u1sharp}
  \\ u_1^\sharp(0)&=u_0\in C^{1-\kappa}\;.\no
  \\
\sL u_2^\sharp&= R(u)\circ \Delta_{>R}\eta +F(u)\Delta_{\leq R}\eta\no
\\&\quad+(F'(u)\prec u)\circ  \Delta_{>R}\eta-F(u)F'(u)\Delta_{>R_1}(\sI(\eta)\circ \eta)\;,\label{eq:u2sharp}
\\ u_2^\sharp(0)&=0\;.\no
 \end{align}
Here the parameters $R$ and $R_1$ will be chosen below and the last term $F(u)F'(u)\Delta_{>R_1}(\sI(\eta)\circ \eta)$ comes from the paraproduct decomposition of $(F'(u)\prec u)\circ \Delta_{>R}\eta$ as in \eqref{product}. We have to put $\Delta_{>R_1}(\sI(\eta)\circ \eta)$ into equation of $u_1^\sharp$ since $\sI(\eta)\circ \eta \in C^{-2\kappa}$ not in $L^\infty$.

The primary challenge lies in obtaining a suitable estimate for $R(u)\circ \eta$.
In \cite{GIP15}, it has been demonstrated that $\|R(u)\|_{1+}$ exhibits superlinear dependence on $u$. In the subsequent lemma, we provide a more nuanced estimate. Specifically, we decompose $R(u)\circ \eta$ into a transport-type part and other components that depend linearly on appropriate norms of $u_1^\sharp$, $u$, and $u_2^\sharp$.

\begin{lemma}\label{Ru} Let $\eta\in C^{\alpha-2}$ with $2/3<\alpha<1$. Then there exists  a locally bounded map $G:C^{1-\kappa}\to  C^{1}$ such that for $\eps>0$
$$\|R(u)\circ \eta-\nabla u^{\sharp}(G(u)\circ \eta) \|_{L^\infty}\lesssim\|\eta\|_{\alpha-2}(1+ \|u_1^\sharp\|_{2\alpha}+\|u\|_{\alpha})+\|\eta\|_{{-2+\eps}} \|u_2^\sharp\|_{{2-\eps/2}}\;,$$
and $G(u)=G^1+G^2$. For $\delta\in(0,1]$
 $$\|G^1\|_{2-\kappa}\lesssim 1+\|u_1^\sharp\|_{1-\kappa}\;,\quad\|G^2\|_{1+\delta}\lesssim \|u_2^\sharp\|^\delta_{1+}\;.$$
\end{lemma}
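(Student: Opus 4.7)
The plan is to combine a second-order Bony-type paralinearization of $R(u)=F(u)-F'(u)\prec u$ with the paracontrolled ansatz $u=u_1+u^\sharp$, in order to isolate the transport factor $\nabla u^\sharp$ inside the resonant product with $\eta$.

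First, I would start from the block-wise representation $R(u)=\sum_j \bigl(\Delta_j F(u)-(S_{j-1}F'(u))\,\Delta_j u\bigr)$ and apply a second-order Taylor expansion of $F$ on each Littlewood--Paley block. This is the classical Bony argument yielding $R(u)\in C^{2(1-\kappa)}$, but with a quadratic $\|u\|_{1-\kappa}^{2}$ bound, which is precisely the obstruction to a global estimate. To trade the quadratic dependence for a linear one, I would substitute $u=u_1+u^\sharp$ with $u_1=F(u)\prec \sI(\Delta_{>R}\eta)$ and exploit the fact that $u_1$ is itself paracontrolled by $\eta$. Combining the block-wise Taylor expansion with the mean-zero property of the Littlewood--Paley kernels (an integration-by-parts at scale $2^{-j}$) brings out a factor $\nabla u=\nabla u_1+\nabla u^\sharp$ in each resonant piece of $R(u)\circ\eta$. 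The $\nabla u^\sharp$ part is kept as the transport term $\nabla u^\sharp\,(G(u)\circ\eta)$, where $G(u)$ encodes the surviving $F'(u)$, $F''(u)$ factors multiplied by the lift $\sI(\Delta_{>R}\eta)$; the $\nabla u_1$ part, after a further use of the paracontrolled structure of $u_1$, is absorbed into the same $G(u)\circ\eta$ coefficient.

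Once the transport term has been extracted, the residual pieces consist of paraproduct commutators controlled via Lemmas \ref{lem:para}, \ref{lem:com2}, \ref{commutator1}, all bounded in $L^\infty$. The decomposition $G(u)=G^1+G^2$ mirrors $u^\sharp=u_1^\sharp+u_2^\sharp$: $G^1$ collects contributions built from the smoother piece $u_1^\sharp\in\mC_T^{2\alpha,\gamma}\hookrightarrow C^{1-\kappa}$ together with the lift $\sI(\Delta_{>R}\eta)$, yielding $\|G^1\|_{2-\kappa}\lesssim 1+\|u_1^\sharp\|_{1-\kappa}$, while $G^2$ gathers contributions involving $u_2^\sharp$, for which the bound $\|G^2\|_{1+\delta}\lesssim \|u_2^\sharp\|_{1+}^{\delta}$ follows by an interpolation between the natural $C^1$ estimate and the higher regularity of $u_2^\sharp$. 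The $u_2^\sharp$-resonance against $\eta$ in the final estimate is then treated by pairing the rougher $\|\eta\|_{-2+\eps}$ against the higher regularity $\|u_2^\sharp\|_{2-\eps/2}$, producing the second term on the right-hand side of the claimed bound.

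The main obstacle is the algebraic bookkeeping: correctly identifying which Taylor--paraproduct reassociations feed into the transport factor $\nabla u^\sharp\,(G(u)\circ\eta)$ and which remain as genuine error terms, while simultaneously verifying that every error is controlled in $L^\infty$ (rather than the weaker $C^{-2\kappa}$ one would get from a naive resonant estimate) with only \emph{linear} dependence on $\|u_1^\sharp\|_{2\alpha}$, $\|u\|_\alpha$ and $\|u_2^\sharp\|_{2-\eps/2}$. The structural key enabling this is that $u_1$ is paracontrolled by $\sI(\eta)$ with weight $F(u)$, which lets one factor of $u$ be traded against a factor of $\eta$ at the cost of producing exactly the transport coefficient $G(u)\circ\eta$; without this trade one is stuck with the quadratic Bony bound, which is incompatible with a maximum-principle argument on \eqref{eq:u2sharp}.
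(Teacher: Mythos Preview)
Your high-level strategy --- paralinearize $R(u)$ block by block, insert the decomposition $u=u_1+u^\sharp$, and isolate a $\nabla u^\sharp$ factor --- is correct, but two of the mechanisms you propose are not the ones that actually work, and a crucial step is missing.

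First, the $u_1$ contribution is \emph{not} absorbed into $G(u)\circ\eta$ via the paracontrolled structure of $u_1$. Your plan to extract $\nabla u_1$ is problematic because $u_1\in C^{1-\kappa}$ only, so $\nabla u_1\notin L^\infty$. What the paper does instead is split the increment $u(y)-u(z)$ (which appears naturally from the first-order Taylor remainder of $F$, not from any mean-zero/integration-by-parts trick) into $(u^\sharp(y)-u^\sharp(z))+(u_1(y)-u_1(z))$. The $u_1$-increment, combined with the \emph{other} factor $|u(y)-u(z)|\lesssim\|u\|_\alpha|y-z|^\alpha$ coming from the $F''$ piece, gives $2^{-2\alpha i}\|u\|_\alpha$ for each block, and this is a direct remainder --- linear in $\|u\|_\alpha$, no paracontrolled structure of $u_1$ invoked. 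In particular, $G$ contains no $\sI(\Delta_{>R}\eta)$ factor; it is built purely from $F'$-differences integrated against the Littlewood--Paley kernels and the vector $(y-z)$.

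Second, the step you do not mention is the one that actually lets $\nabla u^\sharp$ sit \emph{outside} the resonant product. After Taylor-expanding $u^\sharp(y)-u^\sharp(z)=\nabla u^\sharp(z)\cdot(y-z)+\text{(higher order)}$, the gradient is evaluated at the \emph{inner} integration variable $z$, not at the point $v$ where the resonant product $\cdot\circ\eta$ is computed. One must replace $\nabla u^\sharp(z)$ by $\nabla u^\sharp(v)$ and control the commutator; this uses $\|u_1^\sharp\|_{2\alpha}$ and $\|u_2^\sharp\|_{2-\eps/2}$ and is where those norms enter the main estimate. Without this commutator step, $\nabla u^\sharp$ stays trapped inside the block sums and you cannot write the term as $\nabla u^\sharp\cdot(G(u)\circ\eta)$.

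Finally, the split $G=G^1+G^2$ is not obtained by interpolation. It comes from telescoping the $F'$-argument: $G^1$ carries the difference $F'(u(z)+\tau(u(y)-u(z)))-F'(u(z)+\tau(u_2^\sharp(y)-u_2^\sharp(z)))$, which via $F''$ produces an increment of $u_1+u_1^\sharp$ and hence the $\|u_1^\sharp\|_{1-\kappa}$ bound; $G^2$ carries $F'(u(z)+\tau(u_2^\sharp(y)-u_2^\sharp(z)))-F'(u(z))$, and since $F'$ is Lipschitz one bounds this by $|u_2^\sharp(y)-u_2^\sharp(z)|^\delta$ for any $\delta\le1$, giving the $\|u_2^\sharp\|_{1+}^\delta$ dependence directly.
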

\begin{proof}
Similarly as \cite[Lemma 2.6]{GIP15}  (``paralinearization lemma'') we  write
$$R(u)=\sum_{i\geq-1}(\Delta_i F(u)-S_{i-1}F'(u)\Delta_iu)=\sum_{i\geq-1} v_i\;.$$
In comparison to \cite[Lemma 2.6]{GIP15}, we will perform additional decomposition of each $v_i$ involving the decomposition of $u=u_1+u^\sharp=u_1+u_1^\sharp+u_2^\sharp$. This allows us to achieve more refined bounds.
More precisely, we write the Littlewood-Paley projections as convolutions and obtain
\begin{align*}
v_i&=\int K_i(x-y)K_{<i-1} (x-z) \Big(F(u(y))-F(u(z))-F'(u(z))(u(y)-u(z))\Big)\;\dif y\dif z
\\&=\int K_i(x-y)K_{<i-1} (x-z)\bigg[\int_0^1F'(u(z)+\tau(u(y)-u(z)))\dif\tau-F'(u(z))\bigg]
\\&\quad\times(u(y)-u(z))\;\dif y\dif z
\\&=I^i_1+I^i_2\;,
\end{align*}
where $K_i=\sF^{-1}\rho_i, K_{<i-1}=\sum_{j<i-1}K_j$ and
\begin{align*}
	I^i_1\eqdef& \int K_i(x-y)K_{<i-1} (x-z)\bigg[\int_0^1F'(u(z)+\tau(u(y)-u(z)))\;
	\dif\tau-F'(u(z))\bigg]
	\\&\times(u^\sharp(y)-u^\sharp(z))\;\dif y\dif z\;,
\\I^i_2\eqdef &\int K_i(x-y)K_{<i-1} (x-z) \int_0^1\int_0^1F''(u(z)+\tau r(u(y)-u(z)))\dif\tau \dif r
\\&\times(u(y)-u(z))(u_1(y)-u_1(z))\;\dif y\dif z\;.\end{align*}
In the following we calculate $\sum_{i\geq -1}I^i_2\circ \eta$ and $\sum_{i\geq -1}I^i_1\circ \eta$ separately.
We will find that $\sum_{i\geq -1}I^i_2\circ \eta$ is part of remainder term.  Using \eqref{u1}, \eqref{f:sCalpha} and $F\in C_b^2$ we obtain
\begin{align*}
	\|I^i_2\|_{L^\infty} &\lesssim \|F\|_{C_b^2} \|u\|_\alpha\|u_1\|_{\alpha}\int| K_i(x-y)K_{<i-1} (x-z)|\times |y-z|^{2\alpha}\;\dif y\dif z
	\\
	 &\lesssim 2^{-2\alpha i}\|u\|_{\alpha}\|u_1\|_{\alpha}\lesssim 2^{-2\alpha i}\|u\|_{\alpha}\;,\end{align*}
 where we use $K_i(x)=2^{id}K(2^ix), K_{<i-1}(x)=2^{id}\bar K(2^ix)$  and change of variable in the last step. Since the Fourier transform of
 each $I^i_{2}$ is supported in $2^i\cA$ with $\cA$ being an annulus, we obtain
 $$\Big\|\sum_iI^i_2\Big\|_{2\alpha}\lesssim \|u\|_{\alpha}\;.$$
Thus using paraproduct estimates Lemma \ref{lem:para} we deduce
$$\Big\|\sum_iI^i_2\circ \eta\Big\|_{3\alpha-2}\lesssim \|u\|_{\alpha}\|\eta\|_{\alpha-2}\;.$$

Write $I^i_1=I^i_{111}+I^i_{112}+I^i_{12}$
where
\begin{equation*}
	\aligned
	I^i_{111}\eqdef &	\int K_i(x-y)K_{<i-1} (x-z)\Big[\int_0^1F'(u(z)+\tau(u(y)-u(z)))\dif\tau-F'(u(z))\Big]\\&\times \Big(u_1^\sharp(y)-u_1^\sharp(z)-\nabla u_1^\sharp(z)\cdot (y-z)\Big)\;\dif y\dif z\;,
\\I^i_{112}\eqdef &	\int K_i(x-y)K_{<i-1} (x-z)\Big[\int_0^1F'(u(z)+\tau(u(y)-u(z)))\dif\tau-F'(u(z))\Big]\\&\times \Big(u_2^\sharp(y)-u_2^\sharp(z)-\nabla u_2^\sharp(z)\cdot (y-z)\Big)\;\dif y\dif z\;,
	\\
I^i_{12}\eqdef 	&	\int K_i(x-y)K_{<i-1} (x-z)\Big [\int_0^1F'(u(z)+\tau(u(y)-u(z)))\dif\tau-F'(u(z))\Big]
\\&\times (y-z)\cdot\nabla u^\sharp (z)\;\dif y\dif z\;.
\endaligned
\end{equation*}
We will also find that
$\sum_iI^i_{111}\circ \eta$ and $\sum_iI^i_{112}\circ \eta$ are part of remainder terms.  Using $F\in C_b^1$ and \eqref{f:sCalpha},
\begin{align*}
	\|I^i_{111}\|_{L^\infty} &\lesssim \|F\|_{C_b^1} \|u_1^\sharp\|_{2\alpha}\int| K_i(x-y)K_{<i-1} (x-z)|\times |y-z|^{2\alpha}\;\dif y\dif z
	\\
	&\lesssim 2^{-2\alpha i}\|u_1^\sharp\|_{2\alpha}\;.
\end{align*}
 Since the Fourier transform of
 each $I^i_{11}$ is supported in $2^i\cA$ with $\cA$ being an annulus, we obtain
 $$\Big\|\sum_iI^i_{111}\Big\|_{2\alpha}\lesssim \|u_1^\sharp\|_{2\alpha}\;.$$
Thus using paraproduct estimates Lemma \ref{lem:para} we obtain
$$\Big\|\sum_iI^i_{111}\circ \eta\Big\|_{3\alpha-2}\lesssim \|u^\sharp_1\|_{2\alpha}\|\eta\|_{\alpha-2}\;.$$
Similarly we use \eqref{f:sCalpha} to obtain for $\eps>0$
$$\Big\|\sum_iI^i_{112}\circ \eta\Big\|_{C^+}\lesssim \|u_2^\sharp\|_{\sC^{2-\eps/2}}\|\eta\|_{-2+\eps}\lesssim \|u_2^\sharp\|_{{2-\eps/2}}\|\eta\|_{-2+\eps}\;.$$
In the following we focus on $I^i_{12}$ and write it as
\begin{align*}
	 \Big(\sum_i I^i_{12}\Big)\circ \eta= \sum_{k\sim l}\sum_{i\sim k} \Delta_kI^i_{12} \cdot  \Delta_l \eta\;.
\end{align*}
We use the definition of $I_{12}^i$ and write $\nabla u^\sharp (z)=\nabla u^\sharp (z)-\nabla u^\sharp (v)+\nabla u^\sharp (v)$ to have
\begin{align*}
	&(\Delta_kI^i_{12}\cdot \Delta_l \eta)(v)=J_1+J_2\;,
\end{align*}
where
\begin{align*}
J_1&\eqdef\int K_k(v-x)\int K_i(x-y)K_{<i-1} (x-z)
\Big[\int_0^1F'(u(z)+\tau(u(y)-u(z)))\dif\tau-F'(u(z))\Big]
\\&\quad\times (y-z)(\nabla u^\sharp (z)-\nabla u^\sharp (v))\:\dif y\dif z\dif x\; \cdot \Delta_l\eta (v)\;,
\\
J_2&\eqdef\nabla u^\sharp(v) \Delta_k \Big(\int K_i(\cdot-y)K_{<i-1} (\cdot-z)\Big[\int_0^1F'(u(z)+\tau(u(y)-u(z)))\dif\tau-F'(u(z))\Big]
\\&\quad\times (y-z)\;\dif y\dif z\Big)(v) \cdot \Delta_l\eta (v)\;.
\end{align*}
Concerning $J_1$ we use change of variable and \eqref{f:sCalpha} to  have for $\eps>0$
\begin{align*}
|J_1| &\lesssim\int \int |K_k(v-x)K_i(x-y)K_{<i-1} (x-z)|\cdot |y-z|
\\
&\quad\times(\|u_1^\sharp\|_{2\alpha}|z-v|^{2\alpha-1}+\|u_2^\sharp\|_{\sC^{2-\eps/2}}|z-v|^{1-\eps/2})\;\dif y\dif z\dif x\cdot\|\Delta_l\eta\|_{L^\infty}
\\
&\lesssim\int \int |K_k(x)K_i(x-y)K_{<i-1} (x-z)| \cdot|y-z|
\\
&\quad\times (\|u_1^\sharp\|_{2\alpha}|z|^{2\alpha-1}+\|u_2^\sharp\|_{2-\eps/2}|z|^{1-\eps/2})\;\dif y\dif z\dif x\cdot\|\Delta_l\eta\|_{L^\infty}
\\
&\lesssim  2^{-2i\alpha}2^{(2-\alpha) l}\|u_1^\sharp\|_{2\alpha}\|\eta\|_{\alpha-2}+2^{-i(2-\eps/2)}2^{l(2-\eps)}\|u_2^\sharp\|_{2-\eps/2}\|\eta\|_{-2+\eps}\;,
\end{align*}
 where we use $k\sim i$ and $K_i(x)=2^{id}K(2^ix), K_{<i-1}(x)=2^{id}\bar K(2^ix)$  and change of variable in the last step.
Taking sum over $k\sim l\sim i$ we obtain
 $$\Big\|\sum_{k\sim l\sim i}J_1\Big\|_{L^\infty}
 \lesssim \|u_1^\sharp\|_{2\alpha}\|\eta\|_{\alpha-2}+\|u_2^\sharp\|_{2-\eps/2}\|\eta\|_{-2+\eps}\;.$$
Concerning $J_2$ we define
\begin{align*}
G^1\eqdef \sum_iG^1_i
&\eqdef \sum_i  \int K_i(\cdot-y)K_{<i-1} (\cdot-z)
\\&\cdot\int_0^1\Big[F'(u(z)+\tau(u(y)-u(z)))-F'(u(z)+\tau(u^\sharp_2(y)-u^\sharp_2(z)))\Big]\dif\tau\cdot(y-z)\dif y\dif z\;,
\\
G^2\eqdef \sum_iG^2_i
&\eqdef \sum_i  \int K_i(\cdot-y)K_{<i-1} (\cdot-z)
\\&\cdot\Big[\int_0^1F'(u(z)+\tau(u^\sharp_2(y)-u^\sharp_2(z)))\dif\tau-F'(u(z))\Big](y-z)\dif y\dif z\;.
\end{align*}
Hence, $J_2$ is written as $\nabla u^\sharp \cdot \Delta_k (G^1_i+G_i^2)\Delta_l\eta$. Taking sum over $k\sim l\sim i$ we obtain that
$$\sum_{k\sim l\sim i}J_2=\nabla u^\sharp\cdot G\circ \eta\;.$$
By direct calculation as above we obtain for $u^1=u_1+u_1^\sharp$
\begin{align*}
	\|G^1_i\|_{L^\infty}
	&\lesssim \|F\|_{C_b^2}\int |K_i(\cdot-y)K_{<i-1} (\cdot-z)|\cdot|u^1(y)-u^1(z)||y-z|\dif y\dif z
	\\
	&\lesssim \|u^1\|_{1-\kappa}\int |K_i(\cdot-y)K_{<i-1} (\cdot-z)|\cdot|y-z|^{2-\kappa}\dif y\dif z
	\lesssim  \|u^1\|_{1-\kappa}2^{(-2+\kappa)i}\;.
\end{align*}
Since the Fourier transform of  $G_i^1$ is supported in $2^i\mathcal{A}$ with $\mathcal{A}$ being an annulus, the bound for $G^1$ follows from \eqref{u1}. Similarly we obtain
\begin{align*}
	\|G^2_i\|_{L^\infty} &\lesssim \|F\|_{C_b^2}\int |K_i(\cdot-y)K_{<i-1} (\cdot-z)|\cdot|u^\sharp_2(y)-u^\sharp_2(z)|^{\delta}|y-z|\dif y\dif z
	\\&\lesssim \|u^\sharp_2\|^\delta_{\sC^1}\int |K_i(\cdot-y)K_{<i-1} (\cdot-z)|\cdot|y-z|^{1+\delta}\dif y\dif z
	\lesssim \|u^\sharp_2\|^\delta_{1+}2^{(-1-\delta)i}\;.\end{align*}
Thus the final result holds by similar argument as for $G^1$.
\end{proof}

  \subsection{Bounds for  $u_1^\sharp$ in $C_TC^{1-\kappa}$ and $\mC_T^{2\alpha,\gamma}$} In the following we choose $\gamma=\frac{2\alpha+\kappa-1}2$ and $\alpha$ close to $2/3$.
   By \eqref{eq:u1sharp} we have
 \begin{equation}\label{u1sharp}
 	\aligned
 	u_1^\sharp &= -F(u)\prec \sI(\Delta_{>R}\eta )+P_tu_0\\
 &\quad +\sI\Big(F(u)\prec \Delta_{>R}\eta+F(u)\succ \Delta_{>R}\eta+F(u)F'(u)\Delta_{>R_1}(\sI(\eta)\circ \eta)\Big)\;.
 \endaligned
 \end{equation}
For the first term on the RHS of \eqref{u1sharp} by Lemma \ref{lem:para} we have
 \begin{align*}\| F(u)\prec \sI(\Delta_{>R}\eta) \|_{1-\kappa}\lesssim \|F\|_{L^\infty}\|\Delta_{>R}\eta\|_{-1-\kappa}\lesssim1\;.\end{align*}
 For the second line of  \eqref{u1sharp} we use  paraproduct estimate Lemma \ref{lem:para} to obtain
  \begin{equation}
  	\label{bd}
  	\aligned
  	\|F(u)\prec \Delta_{>R}\eta\|_{-1-\kappa}\lesssim\|F\|_{L^\infty}\|\Delta_{>R}\eta\|_{-1-\kappa} \lesssim1\;,
  	\endaligned
  \end{equation}
  \begin{equation}
  	\label{bd1}
  	\aligned
  	\|F(u)\succ\Delta_{>R}\eta\|_{-1-\kappa}\lesssim\|\Delta_{>R}\eta\|_{-1-\kappa}\lesssim 1\;,
  	\endaligned
  \end{equation}
  and use \eqref{loc} to have for $\eps>0$ small enough 
  \begin{equation}\label{bd2}
  	\aligned
  	&\|F(u)F'(u)\Delta_{>R_1}(\sI(\eta)\circ\eta)\|_{-\alpha+\eps}
  	\lesssim \|F(u)F'(u)\|_{\alpha}\|\Delta_{>R_1}(\sI(\eta)\circ\eta)\|_{-\alpha+\eps}
  	\\ & \lesssim  \|u\|_\alpha 2^{R_1(-\alpha+2\kappa+\eps)}\|\sI(\eta)\circ\eta\|_{-2\kappa}\lesssim \|u\|_\alpha 2^{R_1(-\alpha+2\kappa+\eps)}\;.
  	\endaligned
  \end{equation}
Combining these estimates and using Lemma \ref{Le11} and Lemma \ref{Le12} we arrive at
\begin{align*}\|u_1^\sharp\|_{C_TC^{1-\kappa}}\lesssim1+\|u\|_{\mS_T^{\alpha,\gamma\alpha/2+\eps}}2^{R_1(-\alpha+2\kappa+\eps)},\end{align*}
where we used $\alpha$ close to $2/3$.
Now we choose
 \begin{align}\label{c:R1}
	\|u\|_{\mS_T^{\alpha,\gamma\alpha/2+\eps}}2^{R_1(-\alpha+2\kappa+\eps)}\simeq1,\end{align}
 which implies that
\begin{align}\label{bdu1}\|u_1^\sharp\|_{C_TC^{1-\kappa}}\lesssim1.\end{align}

Furthermore, we derive a bound for $u_1^\sharp$ in a better regularity space with time singularity at $t=0$. First we use  commutator estimate Lemma \ref{commutator1} to obtain
  $$\|(F(u)\prec \sI(\Delta_{>R}\eta ))-\sI(F(u)\prec \Delta_{>R}\eta)\|_{\mC_T^{2\alpha,\gamma}}\lesssim\|u\|_{\mS_T^{\alpha,\gamma}}\|\Delta_{>R}\eta\|_{L^\infty_TC^{\alpha-2}} ,$$
  and use paraproduct estimates Lemma \ref{lem:para} to obtain
  \begin{align*}
  	&\|F(u)\succ \Delta_{>R}\eta\|_{2\alpha-2}
  	\lesssim \|u\|_{\alpha}\|\Delta_{>R}\eta\|_{\alpha-2}\;,
  \end{align*}
Combining these estimates and \eqref{bd2} and using Lemma \ref{Le11} and Lemma \ref{lem:2.8} we arrive at
 \begin{align}\label{bd:u1sharp1}
 \|u_1^\sharp\|_{\mC_T^{2\alpha,\gamma}}&\lesssim\|u\|_{\mS_T^{\alpha,\gamma}}\|\Delta_{>R}\eta\|_{L^\infty_TC^{\alpha-2}}+\|P_tu_0\|_{\mC_T^{2\alpha,\gamma}}+\|u\|_{\mS_T^{\alpha,\gamma\alpha/2+\eps}}2^{R_1(-\alpha+2\kappa+\eps)}
 \\&\lesssim\|u\|_{\mS_T^{\alpha,\gamma}}2^{R(\alpha-1+\kappa)}\|\eta\|_{L^\infty_TC^{-1-\kappa}}+1.\no
\end{align}
Here in the last step we used \eqref{loc} and \eqref{c:R1}.
\subsection{Time regularity of $u$} In this section, we establish bounds on the time regularity of the function $u$. The idea is to constrain it within the temporal regularity of $u^\sharp_2$.
First we use Lemma \ref{lem:para} to have
\begin{align}\label{fu}
	\|F(u)\prec \sI(\Delta_{>R}\eta)\|_{C_T^{\alpha/2,\gamma}L^\infty}\lesssim \|u\|_{C_T^{\alpha/2,\gamma}L^\infty}\|\Delta_{>R}\eta\|_{L_T^\infty C^{\alpha-2}}+1\;.
\end{align}
Thus
 we have
 \begin{align*}
 \|u\|_{C_T^{\alpha/2,\gamma}L^\infty}
& \lesssim \|u\|_{C_T^{\alpha/2,\gamma}L^\infty}\|\Delta_{>R}\eta\|_{L^\infty_TC^{\alpha-2}}+\|u^\sharp\|_{C_T^{\alpha/2,\gamma}L^\infty}+1
 \\
 &\lesssim \|u\|_{C_T^{\alpha/2,\gamma}L^\infty}2^{R(\alpha-1+\kappa)}\|\eta\|_{L^\infty_TC^{-1-\kappa}}+\|u^\sharp\|_{C_T^{\alpha/2,\gamma}L^\infty}+1\;,
 \end{align*}
where we use localizer estimate \eqref{loc} in the second inequality.
 Choose $2^{R(\alpha-1+\kappa)}$ small enough such that the coefficient before $\|u\|_{C_T^{\alpha/2,\gamma}L^\infty}$ on the right hand side is smaller than $1/2$ and $\|u\|_{C_T^{\alpha/2,\gamma}L^\infty}$ could be absorbed by the right hand side. Thus we obtain
 \begin{align}\label{bdu}
 	\|u\|_{C_T^{\alpha/2,\gamma}L^\infty}
& \lesssim \|u^\sharp\|_{C_T^{\alpha/2,\gamma}L^\infty}+1\;.
 \end{align}
 Moreover, concerning $\|u_1^\sharp\|_{C_T^{\alpha/2,\gamma}L^\infty}$,
we use \eqref{u1sharp}. 
Using Lemma \ref{lem:2.8} we have
$$\|P_t u_0-P_su_0\|_{L^\infty}=\|(P_{t-s}-I)P_su_0\|_{L^\infty}\lesssim (t-s)^{\alpha/2}\|u_0\|_{\alpha}\;,$$
which implies that
\begin{align*}
	\|P_tu_0\|_{C_T^{\alpha/2,\gamma}L^\infty}\lesssim 1\;.
\end{align*}
Also, using Lemma \ref{Le11} and \eqref{bd} \eqref{bd1} \eqref{bd2},
the $C_T^{\alpha/2,\gamma}L^\infty$-norm for the second line of \eqref{u1sharp} can  be bounded by a constant,
   which by \eqref{fu} leads to
 \begin{align*}
 \|u_1^\sharp\|_{C_T^{\alpha/2,\gamma}L^\infty}
 &\lesssim \|u\|_{C_T^{\alpha/2,\gamma}L^\infty}\|\Delta_{>R}\eta\|_{L^\infty_TC^{\alpha-2}}+1
 \\
 &\lesssim\|u_1^\sharp\|_{C_T^{\alpha/2,\gamma}L^\infty}2^{R(\alpha-1+\kappa)}\|\eta\|_{L^\infty_TC^{-1-\kappa}}+\|u_2^\sharp\|_{C_T^{\alpha/2,\gamma}L^\infty}\|\eta\|_{L^\infty_TC^{-1-\kappa}}+1\;.\no
\end{align*}
Here in the last step we used  \eqref{bdu} and localizer estimate \eqref{loc}.
Choose $2^{R(\alpha-1+\kappa)}$ small enough such that the coefficient before $\|u_1^\sharp\|_{C_T^{\alpha/2,\gamma}L^\infty}$ on the right hand side is smaller than $1/2$ and $\|u_1^\sharp\|_{C_T^{\alpha/2,\gamma}L^\infty}$  could be absorbed by the right hand side. Thus we obtain
 \begin{align}\label{bd:u1sharp2}&\|u_1^\sharp\|_{C_T^{\alpha/2,\gamma}L^\infty}\lesssim\|u_2^\sharp\|_{C_T^{\alpha,\gamma}L^\infty}\|\eta\|_{L^\infty_TC^{-1-\kappa}}+1.
 \end{align}
 Thus combining with \eqref{u1} \eqref{bdu} and \eqref{bdu1} we arrive at
 \begin{align}\label{bd:utime}&
 	\|u\|_{\mS_T^{\alpha,\gamma}}=\|u\|_{\mC_T^{\alpha,\gamma}}+\|u\|_{C^{\alpha/2,\gamma}_TL^\infty}\lesssim 1+\|u_2^\sharp\|_{\mS_T^{\alpha,\gamma}}+\|u^\sharp\|_{C^{\alpha/2,\gamma}_TL^\infty}\lesssim\|u_2^\sharp\|_{\mS_T^{\alpha,\gamma}}+1\;,
 \end{align}
where we used \eqref{bd:u1sharp2} in the last step.
 Similar argument also implies
 \begin{align}\label{bd:utime1}&\|u\|_{\mS_T^{\alpha,\gamma\alpha/2+\eps}}\lesssim\|u_2^\sharp\|_{\mS_T^{\alpha,\gamma\alpha/2+\eps}}+1\;.
 \end{align}

  \subsection{Bound for $u_2^\sharp$ in $\mL^\infty_T$ and $\mS^{2-\eps/2,\gamma}_T$}

  We first prove the following maximum princple.
  \bt(Maximum principle)\label{Thmax}
Let $w$ be a strong solution to the following equation
\begin{align*}\partial_t w-\Delta w&=b\cdot \nabla w+f\;,
\\w(0)&=w_0,
\end{align*}
 with  $w_0\in L^\infty$ and $b\in \mL^\infty_T$. Then it holds that for $\gamma\in[0,1)$
\begin{align}\label{Max}
\|w\|_{\mL^\infty_T}\leq\|w_0\|_{L^\infty}+C(\gamma)T^{1-\gamma}\sup_{0\leq t\leq T}t^\gamma\|f(t)\|_{L^\infty}\;,
\end{align}
where $C(\gamma)$ is independent of $b$.
\et

\begin{proof}
We apply a probabilistic method, with reversed time. For a space-time function $f$, we set
$$
f^T(t,x):=f(T-t,x)\;.
$$
It is easy to see that $w^T(t,x)=w(T-t,x)$ solves the following backward equation:
\begin{align}\label{GG4}
\p_t w^T+\Delta w^T+b^T\cdot\nabla w^T+f^T=0\;,
\end{align}
with  the final condition
\begin{align}\label{GG5}
w^T(T,x)=w(0,x)=w_0(x)\;.
\end{align}
Under  $b\in \mL^\infty_T$, for each $(t,x)\in[0,T]\times\mR^d$,
it is well known that the following SDE has a (probabilistically) weak solution starting from $x$ at time $t$
$$
X^{t,x}_s=x+\sqrt{2}(W_s-W_t)+\int^s_tb^T(r,X^{t,x}_r)\dif r\;,\ \ \forall s\in[t,T]\;,
$$
where $W$ is a $d$-dimensional Brownian motion on some stochastic basis $(\Omega',\mathcal{F}',\mathbb{P})$.
Thus, for each fixed $(t,x)$, by  It\^o's formula, we have
\begin{align*}
\dif_s w^T(s,X^{t,x}_s)&=(\p_sw^T+\Delta w^T+b^T\cdot\nabla w^T)(s,X^{t,x}_s)\dif s\\
&\quad+(\sqrt{2}\nabla w^T)(s,X^{t,x}_s)\dif W_s\;,
\end{align*}
and by \eqref{GG4} and \eqref{GG5},
\begin{align*}
&\mathbb{E} w_0(X^{t,x}_{T})=\mathbb{E}w^T(T,X^{t,x}_{T})=w^T(t,x)-\mathbb{E}\int^{T}_tf^T(s,X^{t,x}_s)\dif s\;,
\end{align*}
which implies that
\begin{align*}
\|w^T\|_{\mL^\infty_T}
&\leq \|w_0\|_{L^\infty}+\int_0^T\|f(T-s)\|_{L^\infty}\dif s
\\&\leq\|w_0\|_{L^\infty}+\int_0^T(T-s)^{-\gamma}\dif s \sup_{0\leq t\leq T}t^\gamma\|f(t)\|_{L^\infty}
\\&\leq\|w_0\|_{L^\infty}+C(\gamma)T^{1-\gamma} \sup_{0\leq t\leq T}t^\gamma\|f(t)\|_{L^\infty}\;.
\end{align*}
Thus the result follows.
\end{proof}

 For equation \eqref{eq:u2sharp} we write $R(u)\circ \Delta_{>R}\eta$ on the RHS as
 $$R(u)\circ \Delta_{>R}\eta=(G(u)\circ \Delta_{>R}\eta)\cdot (\nabla u^\sharp_1+\nabla u^\sharp_2)+\cJ\;,$$
 where
 $$\cJ\eqdef R(u)\circ \Delta_{>R}\eta -(G(u)\circ \Delta_{>R}\eta)\cdot(\nabla u_2^\sharp + \nabla u_1^\sharp)\;.$$
 Hence, we apply
   Theorem \ref{Thmax} for \eqref{eq:u2sharp} with $b=G(u)\circ \Delta_{>R}\eta$,  $\gamma=\frac{2\alpha+\kappa-1}2$  and $f$ given by $(G(u)\circ \Delta_{>R}\eta)\cdot \nabla u_1^\sharp$, $\cJ$ and other terms on the RHS of \eqref{eq:u2sharp} to obtain
 \begin{equation}\label{Lin:u2sharp}
 	\aligned
 	\|u_2^\sharp\|_{\mL^\infty_T}\lesssim &\sup_{0\leq t\leq T}t^\gamma\|(G(u)\circ \Delta_{>R}\eta)\cdot \nabla u_1^\sharp\|_{L^\infty}+\sup_{0\leq t\leq T}t^\gamma\|\cJ\|_{L^\infty}
 +\|F(u) \Delta_{\leq R}\eta\|_{\mL^\infty_T}
 \\&+\sup_{0\leq t\leq T}t^\gamma\|(F'(u)\prec u)\circ  \Delta_{>R}\eta-F(u)F'(u)\Delta_{>R_1}(\sI(\eta)\circ \eta)\|_{L^\infty}\;.
 \endaligned
 \end{equation}
In the following we consider each term on the RHS of \eqref{Lin:u2sharp}.
Using similar calculation as \eqref{product} we have the following decomposition
\begin{align*}
	&(F'(u)\prec u)\circ  \Delta_{>R}\eta-F(u)F'(u)\Delta_{>R_1}(\sI(\eta)\circ \eta)
	\\&=(F'(u)\prec u^\sharp)\circ  \Delta_{>R}\eta+\text{com}(F'(u),F(u)\prec \sI(\Delta_{>R}\eta),\Delta_{>R}\eta)\\
	&\quad+F'(u) \text{com}(F(u), \sI(\Delta_{>R}\eta), \Delta_{>R}\eta)+F(u)F'(u)(\sI(\Delta_{>R}\eta)\circ \Delta_{>R}\eta)-\Delta_{> R_1}(\sI(\eta)\circ \eta))\;.
\end{align*}
We have the following estimates for each term on the RHS:
\begin{itemize}
	\item Using commutator estimates Lemma \ref{lem:com2} we find
\begin{align*}
&\sup_{0\leq t\leq T}t^\gamma	\|\text{com}(F'(u),F(u)\prec \sI(\Delta_{>R}\eta),\Delta_{>R}\eta)\|_{L^\infty}
\\&+\sup_{0\leq t\leq T}t^\gamma\|F'(u) \text{com}(F(u), \sI(\Delta_{>R}\eta), \Delta_{>R}\eta)\|_{L^\infty}
\lesssim \|u\|_{\mC^{\alpha,\gamma}_T}\|\Delta_{>R}\eta\|_{L^\infty_TC^{\alpha-2}}.
\end{align*}
\item Using paraproduct estimates Lemma \ref{lem:para} we obtain for $\eps>0$
\begin{align*}
		&\sup_{0\leq t\leq T}t^\gamma	\|(F'(u)\prec u^\sharp)\circ  \Delta_{>R}\eta\|_{L^\infty}\lesssim\|\Delta_{>R}\eta\|_{L^\infty_TC^{\alpha-2}}\|u_1^\sharp\|_{\mC_T^{2\alpha,\gamma}}+\|\Delta_{>R}\eta\|_{L^\infty_TC^{-2+\eps}}\|u_2^\sharp\|_{\mC_T^{2-\eps/2,\gamma}}\;.
\end{align*}
\item As $F\in C_b^2$ we have
\begin{align*}
	&\sup_{0\leq t\leq T}t^\gamma\|F(u)F'(u)(\sI(\Delta_{>R}\eta)\circ \Delta_{>R}\eta)-\Delta_{> R_1}(\sI(\eta)\circ \eta)\|_{\mL^\infty_T}
	\\&=\|F(u)F'(u)(\Delta_{\leq R_1}(\sI(\eta)\circ \eta)-\sI(\Delta_{\leq R}\eta)\circ \eta-(\sI(\Delta_{> R}\eta)\circ \Delta_{\leq R}\eta))\|_{\mL^\infty_T}
	\\ &\lesssim 2^{(2\kappa+\eps) R}+2^{(2\kappa+\eps) R_1}\;.
\end{align*}
\end{itemize}
Combining these estimates and using \eqref{bd:u1sharp1}, localizer estimate \eqref{loc}, we arrive at
\begin{align*}
	&\sup_{0\leq t\leq T}t^\gamma\|(F'(u)\prec u)\circ  \Delta_{>R}\eta-F(u)F'(u)\Delta_{>R_1}(\sI(\eta)\circ \eta)\|_{L^\infty}
\\&\lesssim 2^{(2\kappa+\eps) R}+2^{(2\kappa+\eps) R_1}+2^{R(\alpha-1+\kappa)}\|u\|_{\mS_T^{\alpha,\gamma}}+2^{R(-1+\kappa+\eps)}\|u_2^\sharp\|_{\mC_T^{2-\eps/2,\gamma}}+1
\\&\lesssim 2^{(2\kappa+\eps) R}+\|u\|^{\theta_1}_{\mS_T^{\alpha,\gamma\alpha/2+\eps}}+2^{R(\alpha-1+\kappa)}\|u\|_{\mS_T^{\alpha,\gamma}}+2^{R(-1+\kappa+\eps)}\|u_2^\sharp\|_{\mC_T^{2-\eps/2,\gamma}}+1
\\&\lesssim 2^{(2\kappa+\eps) R}+\|u_2^\sharp\|^{\theta_1}_{\mS_T^{\alpha,\gamma\alpha/2+\eps}}+2^{R(\alpha-1+\kappa)}\|u_2^\sharp\|_{\mS_T^{\alpha,\gamma}}+2^{R(-1+\kappa+\eps)}\|u_2^\sharp\|_{\mC_T^{2-\eps/2,\gamma}}+1\,.
\end{align*}
Here $\theta_1=\frac{2\kappa+\eps}{\alpha-2\kappa-\eps}$ and in the second step we used \eqref{c:R1} 
 and in the last step we used \eqref{bd:utime1}  and \eqref{bd:utime}.

 For $	\sup_{0\leq t\leq T}t^\gamma\|\cJ\|_{L^\infty}$ on the RHS of \eqref{Lin:u2sharp} we apply Lemma \ref{Ru} to arrive at
\begin{align*}
	\sup_{0\leq t\leq T}t^\gamma\|\cJ\|_{L^\infty}
&\lesssim \|\Delta_{>R}\eta\|_{L^\infty_TC^{\alpha-2}}(\|u\|_{\mS_T^{\alpha,\gamma}}+\|u_1^\sharp\|_{\mC_T^{2\alpha,\gamma}})+\|\Delta_{>R}\eta\|_{L^\infty_TC^{-2+\eps}}\|u_2^\sharp\|_{\mC_T^{2-\eps/2,\gamma}}
\\&\lesssim 2^{R(\alpha-1-\kappa)}\|u_2^\sharp\|_{\mS_T^{\alpha,\gamma}}+2^{R(-1+\kappa+\eps)}\|u_2^\sharp\|_{\mC_T^{2-\eps/2,\gamma}}+1\,.
\end{align*}
Here in the last step we used \eqref{bd:u1sharp1} and \eqref{bd:utime} and localizer estimate \eqref{loc}.
Moreover, by Lemma \ref{Ru}, \eqref{bdu1} and \eqref{u1} we obtain for $\delta>\kappa$ and close to $\kappa$
\begin{align*}
&\sup_{0\leq t\leq T}t^\gamma\|(G(u)\circ \Delta_{>R}\eta)\cdot \nabla u_1^\sharp\|_{L^\infty}\lesssim \sup_{0\leq t\leq T}t^\gamma\|G(u)\circ \Delta_{>R}\eta\|_{L^\infty}\|u_1^\sharp\|_{\sC^1}
\\&\lesssim (1+ \sup_{t\in[0,T]}(t^\gamma\|u_2^\sharp\|_{{1+}})^{\delta})\|\eta\|_{L^\infty_TC^{-1-\kappa}}\sup_{0\leq t\leq T}t^{\gamma\theta_0}\|u_1^\sharp\|_{C^{1+}}
\\&\lesssim (1+ \sup_{t\in[0,T]}(t^\gamma\|u_2^\sharp\|_{{1+}})^{\delta})\|u_1^\sharp\|^{1-\theta_0}_{C_TC^{1-\kappa}}\sup_{0\leq t\leq T}t^{\gamma\theta_0}\|u_1^\sharp\|^{\theta_0}_{C^{2\alpha}}
\\&\lesssim (1+ \sup_{t\in[0,T]}(t^\gamma\|u_2^\sharp\|_{{1+}})^{\delta})(\|u_2^\sharp\|_{\mS_T^{\alpha,\gamma}}+1)^{\theta_0}2^{R(\alpha-1+\kappa)\theta_0}\;.
\end{align*}
Here $\theta_0=\frac{\kappa+\eps}{2\alpha-1+\kappa}$, $\theta_0+\delta\leq 1$ and we used Lemma \ref{Le32} in the third step and we used \eqref{bdu1} and \eqref{bd:u1sharp1} and \eqref{bd:utime} in the last step.

Substituting the above bounds into the RHS of \eqref{Lin:u2sharp} we use Lemma \ref{Le32} to derive
 \begin{align*}
 \|u_2^\sharp\|_{\mL^\infty_T}
 &\lesssim 1+ \sup_{t\in[0,T]}(t^\gamma\|u_2^\sharp\|_{{1+}})^{\delta/(1-\theta_0)}+\|u_2^\sharp\|_{\mS_T^{2-\eps/2,\gamma}}^{\frac{\alpha\theta_1}{2-\eps/2}}\|u_2^\sharp\|_{\mL^\infty_T}^{\theta_1(1-\frac{\alpha}{2-\eps/2})}
 \\&\quad+2^{R(1+\kappa+\eps)}+2^{R(\alpha-1+\kappa)}\|u^\sharp_2\|_{\mS_T^{2-\eps/2,\gamma}}^{\frac{\alpha}{2-\eps/2}}\|u_2^\sharp\|_{\mL^\infty_T}^{1-\frac{\alpha}{2-\eps/2}}+2^{R(-1+\kappa+\eps)}\|u_2^\sharp\|_{\mS_T^{2-\eps/2,\gamma}}+1\;,
 \end{align*}
 which by Young's inequality implies
 \begin{equation}\label{bd:u2infty}
 	\aligned
 \|u_2^\sharp\|_{\mL^\infty_T}
 & \lesssim 1+ \|u_2^\sharp\|^{\delta/(1-\theta_0)}_{\mS_T^{2-\eps/2,\gamma} }+\|u_2^\sharp\|_{\mS_T^{2-\eps/2,\gamma}}^{\frac{\alpha\theta_1}{(2-\eps/2)(1-\theta_1)+\alpha\theta_1}}
 \\&\quad+2^{R(1+\kappa+\eps)}+2^{(2-\eps/2)R(\alpha-1+\kappa)/\alpha}\|u^\sharp_2\|_{\mS_T^{2-\eps/2,\gamma}}+2^{R(-1+\kappa+\eps)}\|u_2^\sharp\|_{\mS_T^{2-\eps/2,\gamma}}+1\;.
 \endaligned
 \end{equation}

To keep balance we let
$$\|u_2^\sharp\|_{\mS_T^{2-\eps/2,\gamma}}2^{(2-\eps/2)R(\alpha-1+\kappa)/\alpha}\simeq2^{R(1+\kappa+\eps)}$$ which suggest us take
$$2^{R\frac{(2-\alpha+\alpha\kappa-2\kappa+\frac\eps2(3\alpha-1+\kappa))}{\alpha}}\simeq\|u_2^\sharp\|_{\mS_T^{2-\eps/2,\gamma}}+C.$$
Here $C$ is a large constant such that \eqref{bdu} and \eqref{bd:u1sharp2} hold.
Substituting this choice of $R$ into \eqref{bd:u2infty} we obtain
   \begin{align}\label{bd:u2infty1}\|u_2^\sharp\|_{\mL^\infty_T}
\lesssim \|u_2^\sharp\|_{\mS_T^{2-\eps/2,\gamma}}^{\Theta}+1.
 \end{align}
 Here $\theta=\frac{\alpha(1+\kappa+\eps)}{2-\alpha-2\kappa+\alpha\kappa+\frac\eps2(3\alpha-1+\kappa)}$ and $\Theta=\theta\vee \frac{\delta}{1-\theta_0}\vee \frac{\alpha\theta_1}{(2-\eps/2)(1-\theta_1)+\alpha\theta_1}$.

Finally  we use Schauder estimate Lemma \ref{Le11} for $u_2^\sharp$ and  obtain $\|u_2^\sharp\|_{\mS_T^{2-\eps/2,\gamma}}$ could be bounded by the RHS of \eqref{Lin:u2sharp}, which is bounded by \eqref{bd:u2infty1}, and $\sup_{t\in[0,T]}t^\gamma\|G(u)\circ \Delta_{>R}\eta\|_{L^\infty}\|u_2^\sharp\|_{{1+}}$. By Lemma \ref{Ru}, \eqref{bdu1} and paraproduct estimates Lemma \ref{lem:para} we obtain
  \begin{align*}
  	\sup_{t\in[0,T]}t^\gamma\|G(u)\circ \Delta_{>R}\eta\|_{L^\infty}\|u_2^\sharp\|_{{1+}} &\lesssim
  	 \sup_{t\in[0,T]}t^\gamma(1+\|u_2^\sharp\|^{\delta}_{{1+}})\|u_2^\sharp\|_{{1+}}\|\eta\|_{-1-\kappa}
  \\ &\lesssim \|u_2^\sharp\|_{\mS_T^{2-\eps/2,\gamma}}^{(1+\delta)(1+\eps)/2}\|u_2^\sharp\|^{(1+\delta)(1-\eps)/2}_{\mL^\infty_T},
\end{align*}
where in the last step we used Lemma \ref{Le32}. Young's inequality and \eqref{bd:u2infty1} imply
  \begin{align*}
 \|u_2^\sharp\|_{\mS_T^{2-\eps/2,\gamma}}\lesssim \|u_2^\sharp\|_{\mS_T^{2-\eps/2,\gamma}}^{\Theta}+1+\eps\|u_2^\sharp\|_{\mS_T^{2-\eps/2,\gamma}}+\|u_2^\sharp\|_{\mS_T^{2-\eps/2,\gamma}}^{\Theta\frac{(1+\delta)(1+\eps)}{1-\delta+\eps(1+\delta)}}.
 \end{align*}

Take $\eps$ close to $0$, $\alpha$ close to $2/3$ and $\delta$ close to $\kappa$. As we have $-4\kappa^2+7\kappa<1$, we obtain $\Theta\frac{(1+\delta)(1+\eps)}{1-\delta+\eps(1+\delta)}<1$. Thus we obtain
\begin{align*}
 \|u_2^\sharp\|_{\mS_T^{2-\eps/2,\gamma}}\lesssim &1.
 \end{align*}
 By \eqref{bd:utime} and \eqref{bd:u1sharp1} this also implies that
\begin{align*}
 \|u\|_{\mS_T^{\alpha,\gamma}}\lesssim 1,\qquad  \|u^\sharp_1\|_{\mC_T^{2\alpha,\gamma}}\lesssim 1.
 \end{align*}
 Hence the result follows.
\appendix
\renewcommand{\appendixname}{Appendix~\Alph{section}}
\renewcommand{\theequation}{A.\arabic{equation}}
\section{Appendix}

\subsection{Schauder estimates}
Recall that $$\|f\|_{\mS^{\alpha,\beta}_T}:=\sup_{t\in[0,T]}t^\beta\|f(t)\|_{\alpha}+\sup_{0\leq s< t\leq T}s^\beta\frac{\|f(t)-f(s)\|_{L^\infty}}{|t-s|^{\alpha/2}}\,.$$
It is well-known that (see \cite[Lemma 2.10]{MW17})
	\begin{align}\label{Es19}
	\|\Delta_j P_t f\|_{L^\infty}\lesssim_C e^{-2^{2j}t}\|\Delta_jf\|_{L^\infty},\ { j\geq 0}, t\geq 0.
	\end{align}
	
We recall the following result from \cite[Lemma 2.8]{ZZZ20}.
\bl\label{lem:2.8}
Let $T>0$.
\begin{itemize}
\item For any $\theta>0$ and $\alpha\in\mR$, there is a constant $C=C(\alpha,\theta, T)>0$ such that for all $t\in (0,T]$,
\begin{align}\label{E1}
\|P_t f\|_{{\theta+\alpha}}\lesssim_C t^{-\theta/2}\|f\|_{\alpha}.
\end{align}
\item For any $\theta<0$, there is a constant $C=C(\theta, T)>0$ such that for all $t\in (0,T]$,
\begin{align}\label{E4}
\| P_t f\|_{L^\infty}\lesssim_C t^{\theta/2}\|f\|_{\theta}.
\end{align}
\item For any $0<\theta<2$, there  is a constant $C=C(\theta, T)>0$ such that for all $t\in [0,T]$,
\begin{align}\label{E2}
\|P_t f-f\|_{L^\infty}\lesssim_C t^{\theta/2}\|f\|_{{\theta}}.
\end{align}
\end{itemize}
\el

\bl\label{Le11}
For any $\alpha\in(0,2),\gamma\in[0,1)$
\begin{align}\label{EG1}
\|\sI f\|_{\mS^{2-\alpha,\gamma}_T}
\lesssim\|f\|_{\mC^{-\alpha,\gamma}_T}.
\end{align}
\el
\begin{proof}
	 For $t\in(0,T]$,  by \eqref{Es19}, we have for $j\geq0$
	\begin{align*}
	\|\Delta_j\sI f(t)\|_{L^\infty}&\lesssim \int^t_0e^{-(t-s)2^{2j}}s^{-\gamma}\dif s \sup_{t\in[0,T]}t^\gamma\|\Delta_jf\|_{{L^\infty}}\no\\
&\lesssim 2^{j\alpha}\Big( \int^{t/2}_0 2^{-2j}(t-s)^{-1}s^{-\gamma}\dif s +t^{-\gamma}\int_{t/2}^t e^{-(t-s)2^{2j}}\dif s \Big)\|f\|_{\mC^{-\alpha,\gamma}_T}\no
\\&\lesssim 2^{-2j+j\alpha} t^{-\gamma}\|f\|_{\mC^{-\alpha,\gamma}_T}.
	\end{align*}
	When $j=-1$ it is easy to see
\begin{align*}
	\|\Delta_{-1}\sI f(t)\|_{L^\infty}&\lesssim t^{-\gamma}\|f\|_{\mC^{-\alpha,\gamma}_T}.
	\end{align*}
	Thus we obtain
	\begin{align}\label{If}
	\|\sI f(t)\|_{2-\alpha}&\lesssim t^{-\gamma}\|f\|_{\mC^{-\alpha,\gamma}_T}.
	\end{align}
	On the other hand, let $u=\sI f$. For $0\leq t_1< t_2\leq T$, we have
	\begin{align}\label{u2u1}
	u(t_2)-u(t_1)=(P_{t_2-t_1}-I)\sI f(t_1)+\int_{t_1}^{t_2}P_{t_2-s}f(s) \dif s =:I_1+I_2.
	\end{align}
	For $I_1$,  by \eqref{E2} and \eqref{If}, we have
	\begin{align*}
	\|I_1\|_{L^\infty}&\lesssim (t_2-t_1)^{\frac{2-\alpha}{2}}\|\sI f(t_1)\|_{2-\alpha}\\
	&\lesssim (t_2-t_1)^{\frac{2-\alpha}{2}}t_1^{-\gamma}\|f\|_{\mC^{-\alpha,\gamma}_T}.
	\end{align*}
	For $I_2$, by \eqref{E4} we have
	\begin{align*}
	\|I_2\|_{L^\infty}&\lesssim
	\int_{t_1}^{t_2}(t_2-s)^{-\frac{\alpha}{2}}s^{-\gamma}\dif s\|f\|_{\mC^{-\alpha,\gamma}_T}\\
	&\lesssim (t_2-t_1)^{\frac{2-\alpha}{2}}t_1^{-\gamma}\|f\|_{\mC^{-\alpha,\gamma}_T}.
	\end{align*}
These together with \eqref{If} yields \eqref{EG1}.
\end{proof}
We also prove the following result.
\bl\label{Le12}
For any $\alpha\in(0,2),\gamma\in(0,1)$
\begin{align}\label{EG11}
\|\sI f\|_{\mS^{2-\alpha}_T}
\lesssim\|f\|_{\mC^{-\alpha+2\gamma,\gamma}_T}.
\end{align}
\el
\begin{proof}
	For $\gamma=0$ the result follows from \eqref{EG1}. In the following we consider $\gamma\in (0,1)$. For $t\in(0,T]$,  by \eqref{Es19}, we have for $j\geq0$
	\begin{align*}
	\|\Delta_j\sI f(t)\|_{L^\infty}&\lesssim \int^t_0e^{-(t-s)2^{2j}}s^{-\gamma}\dif s \sup_{t\in[0,T]}t^\gamma\|\Delta_jf\|_{{L^\infty}}\no\\
&\lesssim 2^{j(\alpha-2\gamma)} \int^{t}_0 2^{-2j(1-\gamma)}(t-s)^{-(1-\gamma)}s^{-\gamma}\dif s\|f\|_{\mC^{-\alpha+2\gamma,\gamma}_T}\no
\\&\lesssim 2^{-2j+\alpha j} \|f\|_{\mC^{-\alpha+2\gamma,\gamma}_T}.
	\end{align*}
	When $j=-1$ it is easy to see
\begin{align*}
	\|\Delta_{-1}\sI f(t)\|_{L^\infty}&\lesssim \|f\|_{\mC^{-\alpha+2\gamma,\gamma}_T}.
	\end{align*}
	Thus we obtain
	\begin{align}\label{If1}
	\|\sI f(t)\|_{2-\alpha}&\lesssim \|f\|_{\mC^{-\alpha+2\gamma,\gamma}_T}.
	\end{align}
With $u=\sI f$ and writing $u(t_2)-u(t_1)=I_1+I_2$ as in \eqref{u2u1},
 by \eqref{E2}  and \eqref{If1} we have
	\begin{align*}
	\|I_1\|_{L^\infty}&\lesssim (t_2-t_1)^{\frac{2-\alpha}{2}}\|\sI f(t_1)\|_{2-\alpha}\\
	&\lesssim (t_2-t_1)^{\frac{2-\alpha}{2}}\|f\|_{\mC^{-\alpha+2\gamma,\gamma}_T}.
	\end{align*}
For $I_2$, by \eqref{E4}, we have
	\begin{align*}
	\|I_2\|_{L^\infty}&\lesssim
	\int_{t_1}^{t_2}(t_2-s)^{(-\frac{\alpha}{2}+\gamma)\wedge0}s^{-\gamma}\dif s\|f\|_{\mC^{-\alpha+2\gamma,\gamma}_T}\\&\lesssim
	(t_2-t_1)^{\frac{2-\alpha}{2}}\int_{0}^{1}(1-u)^{(-\frac{\alpha}{2}+\gamma)\wedge0}u^{-\gamma}\dif u  \|f\|_{\mC^{-\alpha+2\gamma,\gamma}_T}\\
	&\lesssim (t_2-t_1)^{\frac{2-\alpha}{2}} \|f\|_{\mC^{-\alpha+2\gamma,\gamma}_T}.
	\end{align*}
These together with \eqref{If1} yields \eqref{EG11}.
\end{proof}

The following interpolation inequality will be used frequently, which is an easy consequence of H\"older's inequality and the corresponding definition
 (see e.g. \cite[Lemma A.3]{GH18a} for a discrete version).
\bl\label{Le32}
Let  $\theta\in[0,1]$ and $\alpha,\alpha_1,\alpha_2\in\mR$ satisfying
$ \alpha=\theta \alpha_1+(1-\theta)\alpha_2$.
Then we have
\begin{align}\label{DQ1}
\|f\|_{\alpha}\leq \|f\|_{\alpha_1}^\theta\|f\|_{{\alpha_2}}^{1-\theta}.
\end{align}
Moreover, for any $0<\alpha<\beta\leq2$ with $\theta=\alpha/\beta$ and for any $0\leq\theta\delta_1\leq \delta\leq1$, we also have
\begin{align}\label{AM1}
\|f\|_{\mS^{\alpha,\delta}_T}\lesssim \|f\|_{\mS^{\beta,\delta_1}_T}^{\theta}
\|f\|_{\mL^\infty_T}^{1-\theta}.
\end{align}
\el

\subsection{Paraproduct estimates}

We collect some useful estimates on paraproducts from \cite{GIP15}.

\begin{lemma}\label{lem:para}
For any  $\beta\in\R$,
	\begin{equation}\label{GZ0}
		\|f\prec g\|_{\beta}\lesssim\|f\|_{L^\infty}\|g\|_{{\beta}}\,,
	\end{equation}
	and for any $\alpha<0$ and $\beta\in\mR$,
	\begin{equation}\label{GZ1}
		\|f\prec g\|_{{\alpha+\beta}}\lesssim\|f\|_{\alpha}\|g\|_{\beta}\,.
	\end{equation}
	Moreover, for any $\alpha,\beta\in\mR$ with  $\alpha+\beta>0$,
	\begin{equation}\label{GZ2}
		\|f\circ g\|_{\alpha+\beta}\lesssim\|f\|_{\alpha}\|g\|_{\beta}\,.
	\end{equation}
\end{lemma}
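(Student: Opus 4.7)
The plan is to establish all three estimates by direct Littlewood--Paley analysis, exploiting the two structural features that distinguish paraproducts from resonant products. Write $f \prec g = \sum_{j} S_{j-1} f \cdot \Delta_j g$ where $S_{j-1} f = \sum_{i < j-1}\Delta_i f$; the Fourier support of each summand lies in a dilated annulus $2^j \mathcal{A}$ (since $S_{j-1}f$ is supported on a ball of radius $\sim 2^{j-2}$ and $\Delta_j g$ on an annulus of radius $\sim 2^j$). By contrast, $f \circ g = \sum_{|i-j|\le 1}\Delta_i f \cdot \Delta_j g$ has summands whose Fourier support only lies in a ball of radius $\sim 2^j$. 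This dichotomy dictates the summation ranges in the final estimate.

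For \eqref{GZ0} and \eqref{GZ1}, the annular support of the paraproduct summands implies $\Delta_k(S_{j-1}f \cdot \Delta_j g) = 0$ unless $j \sim k$, so only a bounded number of $j$ contribute to $\Delta_k(f \prec g)$. For \eqref{GZ0} I would apply the trivial bound $\|S_{j-1}f\|_{L^\infty}\lesssim \|f\|_{L^\infty}$ and the fact that $2^{k\beta}\|\Delta_k g\|_{L^\infty}\lesssim \|g\|_\beta$ directly. For \eqref{GZ1}, the assumption $\alpha<0$ makes $\|S_{j-1}f\|_{L^\infty}\le \sum_{i<j-1}\|\Delta_i f\|_{L^\infty}\lesssim \|f\|_\alpha \sum_{i<j-1} 2^{-i\alpha}\lesssim 2^{-j\alpha}\|f\|_\alpha$ via a geometric series, which is precisely where the sign condition on $\alpha$ enters; combining with $\|\Delta_j g\|_{L^\infty}\lesssim 2^{-j\beta}\|g\|_\beta$ and multiplying by $2^{k(\alpha+\beta)}$ yields the claim after taking the supremum in $k$.

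For \eqref{GZ2}, the ball-type Fourier support of each summand $\Delta_i f\cdot \Delta_j g$ (with $|i-j|\le 1$) means $\Delta_k(\Delta_i f \cdot \Delta_j g)=0$ only requires $j\gtrsim k - N$ for some constant $N$. Thus $\|\Delta_k(f\circ g)\|_{L^\infty}\lesssim \sum_{j\gtrsim k-N} \|\Delta_i f\|_{L^\infty}\|\Delta_j g\|_{L^\infty}\lesssim \|f\|_\alpha\|g\|_\beta \sum_{j\gtrsim k} 2^{-j(\alpha+\beta)}$, and the assumption $\alpha+\beta>0$ is precisely what makes this geometric series summable, giving the bound $\lesssim 2^{-k(\alpha+\beta)}\|f\|_\alpha\|g\|_\beta$. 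The main delicate point is really just bookkeeping of Fourier supports and recognizing that the condition $\alpha+\beta>0$ is sharp: without it the series in the resonant estimate diverges, explaining why the resonant product cannot be controlled when the sum of regularities is non-positive. Since this is a well-known calculation appearing essentially verbatim in \cite{GIP15} and in \cite{BCD}, I would simply reference those sources if a terser exposition is preferred.
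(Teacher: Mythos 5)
Your proof is correct and is precisely the standard Littlewood--Paley argument that the paper itself defers to by citation (Lemma~A.4 of \cite{GIP15}, see also \cite{BCD}): annular Fourier support and a geometric series in $\alpha<0$ for the paraproduct bounds, ball-type support and summability under $\alpha+\beta>0$ for the resonant term. No gaps; the paper gives no proof of its own, so simply citing those sources, as you suggest, is exactly what is done.
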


\begin{lemma}\label{lem:com2}
	For any $\alpha\in (0,1)$ and $\beta,\gamma\in \R$ with
	$\alpha+\beta+\gamma>0$ and $\beta+\gamma<0$, there exists a bounded trilinear operator $\mathrm{com}$
	on $C^\alpha\times C^\beta\times C^\gamma$ such that
	\begin{align}\label{FA1}
		\|\mathrm{com}(f,g,h)\|_{{\alpha+\beta+\gamma}}\lesssim
		\|f\|_{\alpha}\|g\|_{\beta}\|h\|_{\gamma}\,,
	\end{align}
	where for smooth functions $f,g,h$,
	$$
	\mathrm{com}(f,g,h):=(f\prec g)\circ h - f(g\circ h)\,.
	$$
\end{lemma}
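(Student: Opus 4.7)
The target is the bound $\|\mathrm{com}(f,g,h)\|_{\alpha+\beta+\gamma}\lesssim \|f\|_\alpha\|g\|_\beta\|h\|_\gamma$ for smooth $f,g,h$, from which trilinearity and density extend $\mathrm{com}$ to a bounded trilinear form on $C^\alpha\times C^\beta\times C^\gamma$. The plan is to control each Littlewood--Paley block $\Delta_p\mathrm{com}(f,g,h)$ in $L^\infty$ by $2^{-p(\alpha+\beta+\gamma)}\|f\|_\alpha\|g\|_\beta\|h\|_\gamma$, then invoke Definition~\ref{Def25}.

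First expand both summands. Writing $f\prec g=\sum_k S_{k-1}f\,\Delta_k g$ and using that $S_{k-1}f\,\Delta_k g$ has Fourier support in an annulus of size $\sim 2^k$, only indices $|k-i|\le N$ survive in $\Delta_i(f\prec g)$, so
$$(f\prec g)\circ h=\sum_{|i-j|\le 1}\sum_{|k-i|\le N}\Delta_i\bigl(S_{k-1}f\,\Delta_k g\bigr)\Delta_j h,\qquad f(g\circ h)=\sum_{|k-j|\le 1}f\,\Delta_k g\,\Delta_j h.$$
The crucial algebraic identity is
$$\Delta_i(S_{k-1}f\,\Delta_k g)=[\Delta_i,S_{k-1}f]\Delta_k g+S_{k-1}f\,\Delta_i\Delta_k g.$$
Substituting this and decomposing $S_{k-1}f=f-\sum_{l\ge k-1}\Delta_l f$, the diagonal pieces $f\,\Delta_i\Delta_k g\,\Delta_j h$ cancel the expansion of $f(g\circ h)$, and $\mathrm{com}(f,g,h)$ reduces to (i) commutator terms $[\Delta_i,S_{k-1}f]\Delta_k g\,\Delta_j h$ and (ii) tail terms $\Delta_l f\,\Delta_k g\,\Delta_j h$ with $l\ge k-1$, in each case with $i\sim j\sim k$.

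For type (i), realize $\Delta_i$ as convolution with a kernel $K_i(z)=2^{id}K(2^iz)$ to get
$$[\Delta_i,S_{k-1}f]\Delta_k g(x)=\int K_i(x-y)\bigl(S_{k-1}f(y)-S_{k-1}f(x)\bigr)\Delta_k g(y)\,\dif y,$$
and combine the Hölder bound $|S_{k-1}f(y)-S_{k-1}f(x)|\lesssim\|f\|_\alpha|x-y|^\alpha$ with the rapid decay of $K$ to get $\|[\Delta_i,S_{k-1}f]\Delta_k g\|_{L^\infty}\lesssim 2^{-i\alpha-k\beta}\|f\|_\alpha\|g\|_\beta$. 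Multiplying by $\|\Delta_j h\|_{L^\infty}\lesssim 2^{-j\gamma}\|h\|_\gamma$ and using $i\sim j\sim k$ yields a summand of order $2^{-i(\alpha+\beta+\gamma)}\|f\|_\alpha\|g\|_\beta\|h\|_\gamma$. For type (ii), the bound $\|\Delta_l f\|_{L^\infty}\lesssim 2^{-l\alpha}\|f\|_\alpha$ summed over $l\ge k-1$ (convergent since $\alpha>0$) produces the same order. Each contribution to $\Delta_p\mathrm{com}$ has Fourier support of size $\lesssim 2^i$, so only $i\gtrsim p$ matters, and the geometric sum in $i$ converges thanks to $\alpha+\beta+\gamma>0$, giving $\|\Delta_p\mathrm{com}(f,g,h)\|_{L^\infty}\lesssim 2^{-p(\alpha+\beta+\gamma)}\|f\|_\alpha\|g\|_\beta\|h\|_\gamma$.

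The main obstacle is the bookkeeping in the decomposition: one must carefully verify that, after the commutator substitution, the leftover diagonal products $f\,\Delta_i\Delta_k g\,\Delta_j h$ precisely reproduce $f(g\circ h)$ up to admissible type~(i) and (ii) remainders. The hypothesis $\beta+\gamma<0$ is what makes the statement substantive---the resonant product $g\circ h$ is classically ill-defined, so only the rewritten commutator form has meaning---while the complementary condition $\alpha+\beta+\gamma>0$ is exactly what guarantees absolute convergence of the final dyadic sum. Once both points are checked, density of smooth functions in $C^\alpha\times C^\beta\times C^\gamma$ extends $\mathrm{com}$ to the stated trilinear operator.
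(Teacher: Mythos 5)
The paper does not prove this lemma: it is recalled verbatim from \cite{GIP15} (Lemma~2.4 there), so there is no in-paper argument to compare against. Your sketch is essentially the standard proof from that reference: expand $(f\prec g)\circ h$ over blocks, insert the identity $\Delta_i(S_{k-1}f\,\Delta_kg)=[\Delta_i,S_{k-1}f]\Delta_kg+S_{k-1}f\,\Delta_i\Delta_kg$, observe that the diagonal pieces resum to $f(g\circ h)$ exactly (since $\sum_k\Delta_i\Delta_kg=\Delta_ig$), and estimate the commutator and tail remainders blockwise. The commutator bound via the kernel representation and $\|S_{k-1}f\|_{\sC^\alpha}\lesssim\|f\|_{\sC^\alpha}$ (this is where $\alpha\in(0,1)$ enters) is correct, as is the cancellation bookkeeping you flag as the main obstacle.

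One step is stated too loosely. For the tail terms $\Delta_lf\,\Delta_i\Delta_kg\,\Delta_jh$ with $l\ge k-1$, the Fourier support is contained in a ball of radius $\sim 2^{\max(l,i)}=2^l$, not $\lesssim 2^i$; summing over $l$ first, as you do, produces $(f-S_{k-1}f)\Delta_i\Delta_kg\,\Delta_jh$, whose spectral support is unbounded, so the assertion ``each contribution to $\Delta_p\mathrm{com}$ has Fourier support of size $\lesssim 2^i$, hence only $i\gtrsim p$ matters'' fails for these terms. The fix is to keep the double sum: for fixed $l$ one has $\|\Delta_lf\,\Delta_i\Delta_kg\,\Delta_jh\|_{L^\infty}\lesssim 2^{-l\alpha-i(\beta+\gamma)}\|f\|_\alpha\|g\|_\beta\|h\|_\gamma$ and the constraint $p\lesssim l$; summing over $i\le l+O(1)$ uses $\beta+\gamma<0$ to get $\lesssim 2^{-l(\alpha+\beta+\gamma)}$, and the sum over $l\gtrsim p$ then uses $\alpha+\beta+\gamma>0$. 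So $\beta+\gamma<0$ is used quantitatively in the summation, not merely to make the statement ``substantive'' as you suggest. With that correction the argument closes and coincides with the proof in the cited source.
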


We now recall the following result from \cite[Lemma A.1]{CC18}.
\begin{lemma}\label{commutator}
	 For any $\alpha\in (0,1)$, $\beta\in \R$, $\delta\geq 0$
	and $T>0$, there is a constant $C=C(\alpha,\beta,\delta,T)>0$
	such that for all
	$f\in C^{\alpha}$, $g\in C^{\beta}$ and $t\in(0,T]$, $j\geq-1$,
	\begin{align*}
	\begin{split}
&	\|\Delta_j P_t(f\prec g)-\Delta_j (f\prec P_tg)\|_{L^\infty}\lesssim_C t^{-\frac\delta2} 2^{-(\alpha+\beta+\delta)j}\|f\|_{\alpha}\|g\|_{\beta}\,.
\end{split}
	\end{align*}
\end{lemma}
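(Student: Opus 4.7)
The plan is to express the commutator $\Delta_j P_t(f\prec g) - \Delta_j(f\prec P_t g)$ as a time integral of a simpler Laplacian commutator, exploiting that $P_t = e^{t\Delta}$. Since $\partial_s[P_s(f\prec P_{t-s}g)] = P_s[\Delta(f\prec P_{t-s}g) - f\prec \Delta P_{t-s}g]$, integration from $0$ to $t$ yields the identity
\begin{align*}
P_t(f\prec g) - f\prec P_t g = \int_0^t P_s\bigl[\Delta(f\prec P_{t-s}g) - f\prec \Delta P_{t-s}g\bigr]\,ds\,.
\end{align*}
Applying the Leibniz rule to $f\prec h = \sum_k S_{k-1}f\,\Delta_k h$ and cancelling the $S_{k-1}f\,\Delta\Delta_k h$ terms gives the explicit expansion
\begin{align*}
\Delta(f\prec h) - f\prec \Delta h = \sum_k\bigl[(\Delta S_{k-1}f)\Delta_k h + 2(\nabla S_{k-1}f)\cdot\nabla\Delta_k h\bigr]\,.
\end{align*}

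Next I apply $\Delta_j$ to this representation. Each summand has Fourier support in an annulus of size $2^k$, so only $k\sim j$ contribute. Using the standard Bernstein-type bounds for $\alpha\in(0,1)$, namely $\|\Delta S_{k-1}f\|_{L^\infty}\lesssim 2^{(2-\alpha)k}\|f\|_\alpha$ and $\|\nabla S_{k-1}f\|_{L^\infty}\lesssim 2^{(1-\alpha)k}\|f\|_\alpha$, combined with \eqref{Es19} applied to $P_{t-s}g$ at scale $2^k$ yielding $\|\Delta_k P_{t-s}g\|_{L^\infty}\lesssim e^{-c(t-s)2^{2k}}2^{-\beta k}\|g\|_\beta$ and an additional $2^k$ factor for $\nabla \Delta_k P_{t-s}g$ by Bernstein, I obtain
\begin{align*}
\bigl\|\Delta_j\bigl[\Delta(f\prec P_{t-s}g) - f\prec \Delta P_{t-s}g\bigr]\bigr\|_{L^\infty} \lesssim 2^{(2-\alpha-\beta)j}\, e^{-c(t-s)2^{2j}}\|f\|_\alpha\|g\|_\beta\,.
\end{align*}

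Since $P_s$ is a Fourier multiplier and commutes with $\Delta_j$, a second application of \eqref{Es19} produces the factor $e^{-cs\,2^{2j}}$, so that the two exponentials combine to $e^{-ct\,2^{2j}}$, which is independent of $s$. Integrating over $s\in(0,t)$ then gives
\begin{align*}
\|\Delta_j P_t(f\prec g) - \Delta_j(f\prec P_t g)\|_{L^\infty} \lesssim (t\,2^{2j})\, e^{-ct\,2^{2j}}\cdot 2^{-(\alpha+\beta)j}\|f\|_\alpha\|g\|_\beta\,,
\end{align*}
and the claim follows from the elementary inequality $x^{1+\delta/2}e^{-cx}\lesssim_{\delta} 1$ on $[0,\infty)$ applied to $x=t\,2^{2j}$, which converts the prefactor $(t\,2^{2j})e^{-ct\,2^{2j}}$ into $t^{-\delta/2}\,2^{-\delta j}$ for any $\delta\ge 0$.

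The main obstacle is the careful Fourier-support bookkeeping that restricts the $k$-sum to the diagonal band $k\sim j$ after the $\Delta_j$ projection, together with managing the two exponential decay factors so that they conspire to produce exactly $e^{-ct\,2^{2j}}$ and trade off cleanly against the algebraic gain $2^{(2-\alpha-\beta)j}$ to yield the stated balance between temporal singularity $t^{-\delta/2}$ and spatial regularity gain $2^{-(\alpha+\beta+\delta)j}$. The low-frequency block $j=-1$ is handled separately by direct estimates via \eqref{GZ0}-\eqref{GZ1} together with the boundedness of $P_t$ on Besov spaces, absorbing the harmless $t^{-\delta/2}T^{\delta/2}$ prefactor into the constant $C(\alpha,\beta,\delta,T)$.
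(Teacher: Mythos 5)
The paper does not prove this lemma at all: it is imported verbatim from \cite[Lemma A.1]{CC18}, so there is no in-paper argument to compare against. Your proof is correct and self-contained. The Duhamel identity $P_t(f\prec g)-f\prec P_tg=\int_0^t P_s\bigl[\Delta(f\prec P_{t-s}g)-f\prec\Delta P_{t-s}g\bigr]\,\dif s$ is valid (check the endpoints $s=0,t$ of $s\mapsto P_s(f\prec P_{t-s}g)$), the Leibniz expansion correctly cancels the top-order term, and the two Bernstein bounds $\|\Delta S_{k-1}f\|_{L^\infty}\lesssim 2^{(2-\alpha)k}\|f\|_\alpha$, $\|\nabla S_{k-1}f\|_{L^\infty}\lesssim 2^{(1-\alpha)k}\|f\|_\alpha$ are exactly where the hypothesis $\alpha\in(0,1)$ enters (the geometric sums over $i<k-1$ require $1-\alpha>0$). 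The Fourier-support localization to $k\sim j$, the combination of the two semigroup decay factors into $e^{-ct2^{2j}}$, and the trade $(t2^{2j})^{1+\delta/2}e^{-ct2^{2j}}\lesssim_\delta 1$ all hold, and the $j=-1$ block is indeed trivial since $2^{-(\alpha+\beta+\delta)j}$ and $t^{-\delta/2}\ge T^{-\delta/2}$ are then harmless constants. Two cosmetic points: the identity should be justified at fixed frequency $j$ (or for smooth $g$ first and then by density), since $\Delta P_{t-s}g$ is singular as $s\to t$ for distributional $g$, though your frequency-localized bound shows the integral converges absolutely block by block; and for $k\sim j$ with $k\in\{-1,0\}$ one should note $S_{k-1}f$ is either zero or $\Delta_{-1}f$, for which the stated Bernstein bounds still hold. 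Your route via the heat-flow commutator is a clean alternative to the kernel/Taylor-expansion argument in \cite{CC18}, and it in fact supplies a proof that this paper omits.
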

Then we could prove the following commutator result for $\sI$.
\begin{lemma}\label{commutator1}
	Let  $\alpha\in (0,1)$, $\beta\in \R,$ $\gamma\in[0,1)$
	There is a constant $C>0$ such that
	\begin{align}
	\|[ \sI, f \prec ]g\|_{\mC_T^{ \alpha+\beta+2,\gamma}}
	&\lesssim_C\|f\|_{\mS_{T}^{\alpha,\gamma}}\|g\|_{L_T^\infty C^\beta}\,.\label{GA44}
	\end{align}
\end{lemma}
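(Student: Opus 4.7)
The plan is to split the commutator by inserting $f(s)\prec P_{t-s}g(s)$ inside the integrand:
$$
[\sI,f\prec]g(t) = A(t) + B(t),
$$
where
$$
A(t) \eqdef \int_0^t\bigl[P_{t-s}(f(s)\prec g(s)) - f(s)\prec P_{t-s}g(s)\bigr]\,ds,
$$
$$
B(t) \eqdef \int_0^t (f(s)-f(t))\prec P_{t-s}g(s)\,ds.
$$
The piece $A$ is handled by the pointwise-in-time commutator estimate Lemma \ref{commutator}, while $B$ exploits the time-H\"older control $\|f(s)-f(t)\|_{L^\infty}\lesssim s^{-\gamma}|t-s|^{\alpha/2}\|f\|_{\mS_T^{\alpha,\gamma}}$ coming precisely from the $C_T^{\alpha/2,\gamma}L^\infty$ component of the $\mS_T^{\alpha,\gamma}$-norm.

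For the $A$-piece the goal is to bound $t^\gamma\|\Delta_j A(t)\|_{L^\infty}$ by $2^{-(\alpha+\beta+2)j}\|f\|_{\mS_T^{\alpha,\gamma}}\|g\|_{L_T^\infty C^\beta}$ uniformly in $j$ and $t$. I would split the $s$-integral at the Fourier time-scale $t-s = 2^{-2j}$. On the near region $t-s\le 2^{-2j}$, I apply Lemma \ref{commutator} with $\delta=0$, which gives a clean $2^{-(\alpha+\beta)j}s^{-\gamma}$ bound; integrating $s^{-\gamma}$ over a window of length at most $2^{-2j}$ furnishes exactly the missing factor $2^{-2j}$ in all sub-cases (one distinguishes $t\le 2^{-2j}$, $t\in(2^{-2j},2\cdot 2^{-2j})$, and $t\ge 2\cdot 2^{-2j}$, and verifies the bound in each). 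On the far region $t-s>2^{-2j}$, I apply Lemma \ref{commutator} with some $\delta>2$; splitting further $s\in(0,t/2)$ vs.\ $s\in(t/2,t-2^{-2j})$ separates the $s^{-\gamma}$-singularity at $s=0$ from the $(t-s)^{-\delta/2}$-singularity near $s=t$, and a direct computation shows that both sub-pieces contribute $O(2^{-(\alpha+\beta+2)j}t^{-\gamma})$ after collecting the $2^{(\delta-2)j}$ and $t^{1-\delta/2}\le 2^{(\delta-2)j}$ gains.

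For the $B$-piece, the Fourier support of each paraproduct block together with \eqref{Es19} yields
$$
\|\Delta_j B(t)\|_{L^\infty}\lesssim 2^{-\beta j}\|f\|_{\mS_T^{\alpha,\gamma}}\|g\|_{L_T^\infty C^\beta}\int_0^t s^{-\gamma}(t-s)^{\alpha/2}e^{-c(t-s)2^{2j}}\,ds.
$$
After the substitution $u=(t-s)2^{2j}$ (and a simple split of the $u$-integral into $u\le t 2^{2j}/2$ and $u> t 2^{2j}/2$), the heat kernel tames the integral and one obtains the bound $C\,t^{-\gamma}2^{-(\alpha+2)j}$, so that $t^\gamma\|\Delta_j B(t)\|_{L^\infty}\lesssim 2^{-(\alpha+\beta+2)j}\|f\|_{\mS_T^{\alpha,\gamma}}\|g\|_{L_T^\infty C^\beta}$.

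The main technical obstacle is that the naive choice $\delta=2$ in Lemma \ref{commutator} would immediately produce the correct Fourier-side decay $2^{-(\alpha+\beta+2)j}$ but introduces the non-integrable singularity $(t-s)^{-1}$ in the $s$-integral defining $A$. Circumventing this forces the $j$-dependent cut at $t-s = 2^{-2j}$ and the use of \emph{two} different values of $\delta$ (one vanishing, one strictly greater than $2$) on the two time-regimes: this trade-off between Fourier decay and time-integrability is the quantitative heart of the argument and is what makes the clean endpoint regularity $\alpha+\beta+2$ (rather than $\alpha+\beta+2-\eps$) accessible.
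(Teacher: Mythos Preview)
Your proposal is correct and follows essentially the same approach as the paper: the identical decomposition $[\sI,f\prec]g = I_1 + I_2$ (your $A+B$), with $I_1$ handled by Lemma~\ref{commutator} at two values of $\delta$ (the paper uses $\delta=0,4$, you use $\delta=0$ and some $\delta>2$) and $I_2$ via the time-H\"older increment of $f$ combined with heat-kernel smoothing. The only cosmetic difference is that for $B$ you invoke the exponential decay \eqref{Es19} directly, whereas the paper uses the polynomial smoothing \eqref{E1} with two choices $\delta=\alpha,\alpha+4$ and reduces to the same integral estimate \eqref{integral}; both routes yield the same bound.
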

\begin{proof}
	We can write
	\begin{align*}
	[\sI, f \prec ]g(t)&=\int^t_0\Big(P_{t-s}(f(s)\prec g(s))-f(t)\prec P_{t-s} g(s)\Big)\dif s
	\\&=\int_0^t\Big(P_{t-s}(f(s)\prec g(s))-f(s)\prec P_{t-s}g(s)\Big) \dif s
	\\&\quad+\int_0^t(f(s)-f(t))\prec P_{t-s} g(s)\dif s
	\\&=:I_1(t)+I_2(t)\,.
	\end{align*}
For $I_1(t)$, by Lemma \ref{commutator}  with $\delta=0,4$ we have
	\begin{align*}
	\|\Delta_jI_1(t)\|_{L^\infty}
	&\lesssim 2^{-(\alpha+\beta)j}\int_0^t\big((4^j(t-s))^{-2}\wedge 1\big) s^{-\gamma}\dif s\|f\|_{\mS_{T}^{\alpha,\gamma}}\|g\|_{L_T^\infty C^\beta}
\\&\lesssim 2^{-(\alpha+\beta+2)j}t^{-\gamma}\|f\|_{\mS_{T}^{\alpha,\gamma}}\|g\|_{L_T^\infty C^\beta}\,.
	\end{align*}
Here for the last integral we used
\begin{align}\label{integral}
&\int_0^t\big((4^j(t-s))^{-2}\wedge 1\big) s^{-\gamma}\dif s\no
\\&\lesssim 2^{-2j}\int_0^{t/2}(t-s)^{-1} s^{-\gamma}\dif s+ t^{-\gamma}\int_{t/2}^t \big((4^j(t-s))^{-2}\wedge 1\big)\dif s\no
\\&\lesssim 2^{-2j} t^{-1} \int_0^{t/2} s^{-\gamma}\dif s+t^{-\gamma}2^{-2j}\int^{2^{2j}t/2}_0 \big(s^{-2}\wedge 1\big)\dif s
\\&\lesssim 2^{-2j} t^{-\gamma}\,.\no
\end{align}

	For $I_2(t)$, for any $\delta>0$, note that by \eqref{GZ0} and \eqref{E1}
	\begin{align*}
	&\|\Delta_j((f(s)-f(t))\prec P_{t-s} g(s))\|_{L^\infty}\\
	&\quad\lesssim 2^{-(\delta+\beta)j}\|f(s)-f(t)\|_{L^\infty}\|P_{t-s} g(s)\|_{\delta+\beta}\\
	&\quad\lesssim 2^{-(\delta+\beta)j}(t-s)^{\frac{\alpha-\delta}2}s^{-\gamma}\|f\|_{\mS_{T}^{\alpha,\gamma}}\|g(s)\|_{{\beta}}\,,
	\end{align*}
which implies that (choosing $\delta=\alpha, \alpha+4$)
	\begin{align*}
       \|\Delta_jI_2(t)\|_{L^\infty}
&       \lesssim 2^{-(\alpha+\beta)j}\int_0^t(1\wedge ((t-s)4^j)^{-2})s^{-\gamma}\dif s
\|f\|_{\mS_{T}^{\alpha,\gamma}}\|g\|_{L_T^\infty C^\beta}\\
&\lesssim2^{-(\alpha+\beta+2)j}\|f\|_{\mS_{T}^{\alpha,\gamma}}\|g\|_{L_T^\infty C^\beta}\,.
 	\end{align*}
Here for the last integral we used \eqref{integral}.
Thus we complete the proof.
\end{proof}
\medskip

\end{document}